\newcommand{\bC}{\mathbb{C}}
\newcommand{\dist}{\mathop{{\rm dist}}}
\newcommand{\var}{\mathop{{\rm var}}}
\newcommand{\Hil}{\mathscr{H}}
\newcommand{\h}{\mathop{{\rm Re}}}
\newcommand{\hl}{\mathop{{\rm Im}}}
\newcommand{\tr}{{\mathrm{tr }}}
\newcommand{\conv}{\mathop{{\rm conv}}}
\newcommand{\bA}{\mathop{{ \boldsymbol A}}}
\newcommand{\I}{\mathop{{ \boldsymbol I}}}
\newcommand{\z}{\mathop{{ \boldsymbol z}}}
\newcommand{\0}{\mathop{{ \boldsymbol 0}}}
\newtheorem{thm}{Theorem}
\newtheorem{corollary}[thm]{Corollary}
\newtheorem{prop}[thm]{Proposition}
\newtheorem{lemma}[thm]{Lemma}
\newcommand{\bra}{\langle}
\newcommand{\ket}{\rangle}
\newcommand{\tens}[1]{\mathbin{\mathop{\otimes}\limits_{#1}}}
\title{A distance formula  for tuples of operators}
\author[Grover]{Priyanka Grover}
\author[Singla]{Sushil Singla}
\address{Department of Mathematics, Shiv Nadar University, NH-91, Tehsil Dadri, Gautam Buddha Nagar, U.P. 201314, India.}
\email{priyanka.grover@snu.edu.in, ss774@snu.edu.in}
\subjclass[2020]{15A21, 15A27, 47A12, 47B35}
\keywords{Maximal numerical range, joint numerical range, doubly commuting matrices, Toeplitz operators, variance.}
\begin{document}

\maketitle

\begin{abstract}
For a tuple of operators $\boldsymbol{A}= (A_1, \ldots, A_d)$, $\dist (\boldsymbol{A}, \mathbb C^d \boldsymbol{I})$ is defined as $\min\limits_{\boldsymbol{z} \in \mathbb C^d} \|\boldsymbol{A-zI}\|$ 
and 
$\var\limits_x (\boldsymbol{A})$ as $\|\boldsymbol{A}  x\|^2-\sum_{j=1}^d {\big|}\langle x| A_j x\rangle{\big|}^2.$ For a tuple $\boldsymbol{A}$ of commuting normal operators, it is known that $$\dist  (\boldsymbol{A}, \mathbb C^d \boldsymbol{I})^2=\sup_{\|x\|=1}\var_x (\boldsymbol{A}).$$ 
We give an expression for the maximal joint numerical range of a tuple of doubly commuting matrices. Consequently, we obtain that the above distance formula holds for tuples of doubly commuting matrices. We also discuss some general conditions on the tuples of operators for this formula to hold. As a result, we obtain that it holds for tuples of Toeplitz operators as well.
\end{abstract}

\setlength{\parindent}{0pt}
\setlength{\parskip}{1.6ex}

\pagestyle{headings}

%%%%%%%%%%%%%%%%%%%%%%%%%
\section{Introduction}
Let $\Hil$ be a Hilbert space. Let $\mathscr B(\Hil)$ be the space of bounded linear operators on $\Hil$. Bj\"{o}rck and Thom\'ee \cite{bjorckthomee} showed that for a bounded normal operator $A$ on $\Hil$,
\begin{equation*}
    \sup_{\|x\|=1} \left(\|Ax\|^2-{\big|}\langle x|Ax\rangle{\big|}^2\right)=R_A^2,
\end{equation*}
where $R_A$ is the radius of the smallest disc containing the spectrum of $A$. The quantity $ \|A  x\|^2- {\big|}\langle x|A x\rangle{\big|}^2$ is the \emph{variance} of $A$ with respect to $x$, denoted by $\var\limits_x (A)$ (see \cite{audenaert}). Later,
Garske \cite{Garske} proved that a one side inequality is true for any operator $A$ on $\Hil$: \begin{equation}\sup_{\|x\|=1} \var\limits_x (A)\geq R_A^2.\label{eq1}\end{equation} Let $\Hil^d$ be the direct sum of $d$ copies of $\Hil$. For $A_1,\ldots, A_d\in \Hil$, let $\bA$ denote the tuple $(A_1, \ldots, A_d):\Hil\rightarrow \Hil^d$ defined as $\bA x=(A_1 x,\ldots A_d x)$. Then
$$\|\bA\|=\left\|\sum_{j=1}^d A_j^* A_j\right\|^{1/2}.$$ We define the variance of the tuple $\bA$ with respect to $x\in \Hil$ as $\var\limits_x (\bA)= \|\bA x\|^2-\sum\limits_{j=1}^d{\big|}\langle x|A_jx\rangle{\big|}^2$. Ming \cite{Fan} showed that if $\bA$ is a tuple of commuting operators on $\Hil$, then \begin{equation}
    \sup_{\|x\|=1} \var\limits_x (\bA) \geq R_{\boldsymbol{A}}^2,\label{inequality}
\end{equation}
where $R_{\bA}$ is the radius of the smallest disc containing the Taylor spectrum of $\bA$. Moreover, if each $A_j$ is normal, \eqref{inequality} becomes an equality. 
(See also \cite{bhatiasharma}.)

Let $\I$ be the tuple of identity operators $(I, \dots, I)$. For $\z=(z_1,\ldots,z_d)\in \mathbb C^d$, $\z\I$ denotes the tuple $(z_1 I,\ldots,z_d I)$. Let ${\dist(\bA, \mathbb C^d \I)=\min\limits_{\z \in \bC^d} \|\bA-\z \I\|}.$  
  For commuting normal operators, $R_{\bA} =\dist(\bA, \mathbb C^d \I),$ and thus we have
\begin{equation}
\sup_{\|x\|=1}\var_x (\bA)=\dist  (\bA, \mathbb C^d \I)^2. \label{main2}
\end{equation}

We note that for any tuple of operators $\bA$, we have
\begin{equation}
\sup_{\|x\|=1}\var_x (\bA)\leq\dist  (\bA, \mathbb C^d \I)^2. \label{main3}
\end{equation}

To see this, let $x$ be a unit vector in $\Hil$. Then for $\z\in \mathbb C^d$ and $1\leq j\leq d$, we have\begin{align*}
\|A_j x\|^2-\big|\langle x| A_j x\rangle\big|^2 &= \|(A_j-z_j I) x\|^2-\big|\langle x|(A_j-z_j I) x\rangle\big|^2\\ 
&\leq \|(A_j-z_jI)x\|^2.
\end{align*}
Thus for all $\z\in \mathbb C^d$,
\begin{equation*}
\sum_{j=1}^d \big(\|A_j x\|^2-\big|\langle x|A_jx\rangle\big|^2\big) \leq \sum_{j=1}^d \|(A_j-z_j I)x\|^2\leq \|\bA-\z \I\|^2.\end{equation*}
So $\var\limits_x (\bA)\leq \dist  (\bA, \mathbb C^d \I)^2$. Since $x$ was an arbitrary unit vector in $\Hil$, we get \eqref{main3}. 

A \emph{doubly commuting tuple} of operators is one which satisfies $A_i A_j=A_j A_i $ and $A_i^* A_j=A_j A_i^*$ for $1\leq i,j\leq d, i\neq j$. In particular, a tuple of commuting normal operators is doubly commuting. We show that for a finite dimensional Hilbert space $\Hil$, \eqref{main2} holds for a doubly commuting tuple of operators.

We also give some equivalent conditions for any tuple of operators on any Hilbert space to satisfy \eqref{main2}. For a tuple of operators $\bA$ on a Hilbert space $\Hil$, the \emph{joint maximal numerical range} of $\bA$ is defined as \begin{align*}W_0(\bA) = & \{(\lambda_1,\dots,\lambda_d)\in\bC^d :\langle x^{(n)}|A_j x^{(n)}\rangle\rightarrow \lambda_j \text{ for all } 1\leq j\leq d,\\
&\text{where }\|x^{(n)}\|=1\text{ and }\|\bA x^{(n)}\|\rightarrow\|\bA\|\}.
\end{align*} 
We shall denote by $\boldsymbol{z^0}=(z_1^0,\dots, z_d^0)\in\bC^d$, the unique element for which $\dist(\boldsymbol{A}, \bC^d \boldsymbol{I}) = \|\boldsymbol{A- z^0I}\|$. Let $\boldsymbol{A^0}$ denote $\boldsymbol{A- z^0I}$ and for each $1\leq j\leq d$, let $A_j^0=A_j-z_j^0 I$. It is shown that the convexity of $W_0(\boldsymbol{A^0})$ is sufficient for $\bA$ to satisfy \eqref{main2}. As a corollary, we obtain that \eqref{main2} holds for tuples of Toeplitz operators as well. We give an example to show that the convexity of $W_0(\boldsymbol{A^0})$ is not necessary for $\bA$ to satisfy \eqref{main2}. Convexity of the \emph{joint numerical range} and the joint maximal numerical range has been a subject of interest for many authors (see \cite{jnr2, jnr3, jnr8, dashtensor, jnr6, jnr1, jnr4, jnr7, jnr5}).

In Section \ref{section1}, we provide results for doubly commuting matrices. In Section \ref{section2}, we give some conditions for \eqref{main2} to hold for general tuples of operators and obtain results for Toeplitz operators. In Section \ref{section3}, we give some remarks.

\section{Doubly commuting matrices}\label{section1}
For $x_1, x_2\in\mathbb C^n$, let $x_1\bar{\tens{}} x_2$ denote the rank one operator on $\mathbb C^n$ defined as $(x_1\bar{\tens{}} x_2)(y) = \bra x_2 | y\ket x_1$ for all $y\in\mathbb C^n$. Let $\boldsymbol{0}$ be the tuple $(0,\dots, 0)\in\mathbb C^d$.
\begin{thm}\label{orthogonality1} For any tuple of matrices $\bA$, $$\|\bA-\z \I\|\geq \|\bA\| \text{ for all } \z\in \mathbb C^d$$ if and only if 
$$\boldsymbol{0}\in\conv\left\{\left(\langle x|A_1 x \rangle, \dots , \langle x|A_d x\rangle\right): \|x\|=1, \textstyle\bA^* \bA x=\|\bA\|^2 x \right\}.$$
\end{thm}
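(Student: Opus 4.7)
The plan is to recognize this as the tuple analogue of Stampfli's orthogonality criterion for best approximation by scalars, and to prove it by a Hahn--Banach separation argument in $\mathbb C^d\cong\mathbb R^{2d}$. Write $S$ for the set
\[
S=\{(\langle x|A_1 x\rangle,\ldots,\langle x|A_d x\rangle):\|x\|=1,\ \bA^*\bA x=\|\bA\|^2 x\}
\]
appearing on the right-hand side. Because $\bA^*\bA$ is a matrix, its top eigenspace is finite dimensional with a compact unit sphere, and $S$ is the continuous image of this sphere, so $\conv S$ is a compact convex subset of $\mathbb C^d$. (In fact $S$ coincides with $W_0(\bA)$ in the matrix case, since norm-attaining sequences admit norm-attaining subsequential limits.)

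For the ($\Leftarrow$) direction, I would use Carath\'eodory to write $\0=\sum_{k=1}^m t_k\boldsymbol\lambda^{(k)}$, where each $\boldsymbol\lambda^{(k)}=(\langle x_k|A_j x_k\rangle)_{j=1}^d$ comes from a unit vector $x_k$ with $\bA^*\bA x_k=\|\bA\|^2 x_k$. Expanding each summand and using $\|\bA x_k\|^2=\|\bA\|^2$ gives
\[
\|(\bA-\z\I)x_k\|^2 = \|\bA\|^2 - 2\h\sum_{j=1}^d \overline{z_j}\lambda_j^{(k)} + \|\z\|^2,
\]
so $\|\bA-\z\I\|^2$ dominates the right-hand side for every $k$. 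Taking the convex combination with weights $t_k$ collapses the linear term in $\z$, yielding $\|\bA-\z\I\|^2\ge\|\bA\|^2+\|\z\|^2\ge\|\bA\|^2$.

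For the ($\Rightarrow$) direction I would argue the contrapositive. Assume $\0\notin\conv S$; Hahn--Banach (real separation in $\mathbb R^{2d}$) yields $\boldsymbol w\in\mathbb C^d$ and $\delta>0$ such that $\h\sum_j\overline{w_j}\lambda_j\ge\delta$ for every $(\lambda_1,\ldots,\lambda_d)\in\conv S$. I then claim $\|\bA-(t\boldsymbol w)\I\|<\|\bA\|$ for all sufficiently small $t>0$. Suppose not: pick $t_n\downarrow 0$ and unit vectors $x_n$ attaining the norm of $\bA-(t_n\boldsymbol w)\I$ with $\|(\bA-(t_n\boldsymbol w)\I)x_n\|^2\ge\|\bA\|^2$. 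Expanding,
\[
0\le \|\bA\|^2-\|\bA x_n\|^2 \le -2t_n\h\sum_j\overline{w_j}\langle x_n|A_j x_n\rangle+t_n^2\|\boldsymbol w\|^2,
\]
which forces both $\|\bA x_n\|\to\|\bA\|$ and $\h\sum_j\overline{w_j}\langle x_n|A_j x_n\rangle\le t_n\|\boldsymbol w\|^2/2$. By compactness of the unit sphere, extract a subsequence $x_n\to x^*$; then $\|\bA x^*\|=\|\bA\|$, so $\bA^*\bA x^*=\|\bA\|^2 x^*$, and the point $(\langle x^*|A_j x^*\rangle)_j$ lies in $S$ yet satisfies $\h\sum_j\overline{w_j}\langle x^*|A_j x^*\rangle\le 0$, contradicting the separating bound $\delta>0$.

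The delicate point is the limiting step in the ($\Rightarrow$) direction: finite-dimensionality is used both to attain the operator norm at each $x_n$ and to extract a convergent subsequence whose limit $x^*$ actually satisfies the eigenvector equation $\bA^*\bA x^*=\|\bA\|^2 x^*$, rather than merely $\|\bA x^*\|\le\|\bA\|$. Everything else is bookkeeping; the substantive content is the separation and the stability of the maximality condition under passage to the limit.
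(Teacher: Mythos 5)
Your proof is correct, and it follows a genuinely different route from the paper's. The paper's proof of this theorem is very short: it quotes Theorem 8.4 of Zi\c{e}tak's survey, which characterizes $\|\bA-\z\I\|\ge\|\bA\|$ for all $\z$ in terms of the existence of a positive semidefinite $T$ with $\tr T=1$, $\bA^*\bA T=\|\bA\|^2 T$, and $\tr(\bA^*\z\I T)=0$; it then applies the spectral decomposition $T=\sum_i s_i\, x_i\,\bar\otimes\, x_i$ and shows the trace conditions translate exactly into the convex-hull condition on the quadratic-form values. Your argument instead proves both implications from scratch: Carath\'eodory plus direct expansion of the norm for $(\Leftarrow)$, and Hahn--Banach separation of $\{\boldsymbol 0\}$ from $\conv S$ together with compactness of the top eigenspace's unit sphere for $(\Rightarrow)$. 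This is the Stampfli-style separation technique, and in fact it mirrors the paper's own proof of Proposition 8 (where the same separation idea is used in possibly infinite dimensions, but under the extra hypothesis that $W_0(\bA)$ is already convex, so that the conclusion can be $\boldsymbol 0\in W_0(\bA)$ rather than $\boldsymbol 0\in\conv W_0(\bA)$). Your use of compactness to pass to the limit $x_n\to x^*$ and then deduce $\bA^*\bA x^*=\|\bA\|^2x^*$ from $\|\bA x^*\|=\|\bA\|$ is exactly the step that is special to the matrix setting and lets you avoid any convexity assumption on $S$ itself. The trade-off is what one would expect: the paper's route is shorter given the cited duality result, while yours is fully self-contained and more elementary, at the cost of invoking finite-dimensionality more explicitly.
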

\begin{proof} Using Theorem 8.4 of \cite{Zietak2}, $\|\bA-\z \I\|\geq \|\bA\|$ for all $\z\in \mathbb C^d$ if and only if there exists a positive semidefinite matrix $T$ with $\tr(T) = 1$ such that $\bA^*\bA T=\|\bA\|^2 T$ and $\tr(\bA^*\z\I T)=0$ for all $\z\in\mathbb C^d$. Using the spectral theorem for $T$, there are positive numbers $s_1, s_2, \dots, s_\ell$ such that $\sum\limits_{i=1}^\ell s_i =1$ and orthonormal vectors $x_1, \dots, x_\ell$ such that $T=\sum\limits_{i=1}^\ell s_i x_i\overline{\tens{}}x_i$. We note that $\bA^*\bA T=\|\bA\|^2 T$ is equivalent to $\bA^* \bA x_i = \|\bA\|^2x_i$ for each $1\leq i\leq \ell$. To see this, we observe that \begin{align*}
\|\bA\|^2 &= \tr(\boldsymbol{A}^*\bA T) \\
&= \sum\limits_{i=1}^\ell s_i \bra x_i|\boldsymbol{A}^*\bA x_i\ket\\
%&\leq \sum\limits_{j=1}^\ell s_j{\big|}\bra x_j|\boldsymbol{A}^*\bA x_j\ket{\big|}\\
&\leq \|\boldsymbol{A}^*\bA\|. \end{align*}

So $$\sum\limits_{i=1}^\ell s_i\bra x_i|\boldsymbol{A}^*\bA x_i\ket = \|\boldsymbol{A}^*\bA\|= \sum\limits_{i=1}^\ell s_i\|\boldsymbol{A}^*\bA\|.$$ By the condition of equality in the Cauchy-Schwarz inequality, we get $\boldsymbol{A}^*\bA x_i=\|\bA\|^2x_i$ for all $1\leq i\leq\ell$.
We also have that $\tr(\bA^*\z\I T)=0$ for all $\z\in\mathbb C^d$ is equivalent to $\sum\limits_{i=1}^\ell s_i\bra x_i |A_j x_i\ket = 0$ for all $1\leq j\leq d$. This gives the required result.
\end{proof}

%Theorem 1 also follows as a corollary of Theorem 1.3 of \cite{2021}.

Let $W(\bA)=\{(\langle x|A_1 x \rangle, \langle x|A_2 x\rangle, \ldots, \langle x|A_d x\rangle): \|x\|=1\}$ denote the joint numerical range of $\bA$.
Let  $$\mathcal V(\bA)=\{\left(\langle x|A_1 x \rangle, \langle x|A_2 x\rangle, \ldots, \langle x|A_d x\rangle\right): \|x\|=1, \boldsymbol{A}^* \bA x=\|\bA\|^2 x \}.$$ Note that $\mathcal V(\bA)=W(P\bA P)$, where $P$ is the orthogonal projection of $\Hil$ onto the subspace $\{x:  \textstyle\bA^* \bA x=\|\bA\|^2 x\}$. As a consequence of Theorem \ref{orthogonality1}, we have 

\begin{corollary}\label{orthogonality} Let $\boldsymbol{A}$ be a tuple of matrices such that $\mathcal V(\bA)$ is a convex set. Then we have $$\|\boldsymbol{A}-\boldsymbol{z} \boldsymbol{I}\|\geq \|\boldsymbol{A}\|  \text{ for all } \boldsymbol{z}\in \mathbb C^d$$ if and only if there exists a unit vector $x\in\mathbb C^n$ such that $\|\boldsymbol{A}x\|=\|\boldsymbol{A}\|$ and $\langle x|A_jx \rangle=0 \text{ for all } 1\leq j\leq d$.
\end{corollary}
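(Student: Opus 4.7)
The plan is to invoke Theorem \ref{orthogonality1} directly and exploit the convexity hypothesis to collapse the convex hull to $\mathcal V(\bA)$ itself. The only preparatory observation needed is that the set $\mathcal V(\bA)$ is exactly the set appearing inside $\conv\{\cdot\}$ in Theorem \ref{orthogonality1}; for this I would note that for a unit vector $x$, the condition $\|\bA x\|=\|\bA\|$ is equivalent to $\bA^*\bA x = \|\bA\|^2 x$. The nontrivial direction follows from the equality case of the Cauchy--Schwarz inequality applied to
$$\|\bA x\|^2 = \langle x\mid \bA^*\bA x\rangle \leq \|\bA^*\bA x\|\leq \|\bA\|^2,$$
which forces $\bA^*\bA x$ to be a scalar multiple of $x$, and a quick inner product computation identifies the scalar as $\|\bA\|^2$.

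Given this identification, the forward direction is immediate: if $\|\bA-\z\I\|\geq\|\bA\|$ for all $\z\in\bC^d$, Theorem \ref{orthogonality1} yields $\boldsymbol{0}\in\conv\mathcal V(\bA)$, and the convexity assumption on $\mathcal V(\bA)$ gives $\conv\mathcal V(\bA) = \mathcal V(\bA)$, hence $\boldsymbol{0}\in\mathcal V(\bA)$. Unpacking the definition, this produces a unit vector $x$ with $\bA^*\bA x = \|\bA\|^2 x$ (equivalently $\|\bA x\|=\|\bA\|$) and $\langle x\mid A_j x\rangle = 0$ for every $1\leq j\leq d$. The converse direction does not even require convexity: the existence of such an $x$ places $\boldsymbol{0}$ directly in $\mathcal V(\bA)\subseteq\conv\mathcal V(\bA)$, and Theorem \ref{orthogonality1} returns the desired norm inequality.

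There is no genuine obstacle here, as the bulk of the work is already contained in Theorem \ref{orthogonality1}; the corollary simply records that, under convexity, the zero element of $\conv\mathcal V(\bA)$ is attained by a single unit vector rather than by a nontrivial convex combination of unit vectors maximizing $\|\bA x\|$.
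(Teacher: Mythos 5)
Your proposal is correct and follows the same route the paper implicitly intends: apply Theorem~\ref{orthogonality1}, note that $\mathcal V(\bA)$ is precisely the set inside the convex hull there, and use convexity to drop the $\conv$. Your auxiliary observation that, for a unit vector $x$, $\|\bA x\| = \|\bA\|$ is equivalent to $\bA^*\bA x = \|\bA\|^2 x$ (via equality in Cauchy--Schwarz applied to the positive semidefinite operator $\bA^*\bA$) is exactly the small bridge needed to reconcile the phrasing in the corollary statement with the paper's definition of $\mathcal V(\bA)$, and you are also right that the converse direction is independent of the convexity hypothesis.
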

%\begin{proof}Using Theorem \ref{orthogonality1}, $\|\boldsymbol{A}-\boldsymbol{z} \boldsymbol{I}\|\geq \|\boldsymbol{A}\|  \text{ for all } \boldsymbol{z}\in \mathbb C^d$ if and only if $\boldsymbol{0}\in\conv \mathcal V(\boldsymbol{A})$. Since $\mathcal V(\bA)$ is a convex set, we have $\|\boldsymbol{A}-\boldsymbol{z} \boldsymbol{I}\|\geq \|\boldsymbol{A}\|  \text{ for all } \boldsymbol{z}\in \mathbb C^d$ if and only if $\boldsymbol{0}\in \mathcal V(\boldsymbol{A})$. This proves the result.\end{proof}

For a tuple of matrices, $\mathcal V(\bA)=W_0(\bA)$. It was proved in \cite[Theorem 1]{jnr9} that if $\bA$ is a tuple of operators such that $W(\bA)$ is convex, then so is $W_0(\bA)$. Bolotnikov and Rodman  \cite[Theorem 3.1]{rodman} showed that if $\bA$ is a tuple of doubly commuting matrices, then $W(\bA)$ is convex.
Thus we have the following lemma.

\begin{lemma}\label{doubly}
For a doubly commuting tuple of matrices $\bA$, $\mathcal V(\bA)$ is convex.
\end{lemma}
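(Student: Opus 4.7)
The plan is to assemble the three ingredients that the authors have collected in the paragraph immediately preceding the statement. First, I would invoke the Bolotnikov--Rodman theorem \cite[Theorem 3.1]{rodman} to conclude that the joint numerical range $W(\bA)$ of a doubly commuting tuple of matrices is convex. Next, \cite[Theorem 1]{jnr9} promotes convexity of $W(\bA)$ to convexity of the joint maximal numerical range $W_0(\bA)$. Finally, since for a matrix tuple $W_0(\bA)$ coincides with $\mathcal V(\bA)$, convexity transfers and the lemma follows.

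The only step worth unpacking is the identification $W_0(\bA) = \mathcal V(\bA)$ in the matrix case, which is a short compactness argument. The inclusion $\mathcal V(\bA) \subseteq W_0(\bA)$ is immediate by using a constant sequence at any unit vector in the top eigenspace of $\bA^*\bA$. For the reverse inclusion, given $(\lambda_1,\ldots,\lambda_d)\in W_0(\bA)$ realized by unit vectors $x^{(n)}$ with $\|\bA x^{(n)}\|\to \|\bA\|$, I would extract a subsequence $x^{(n_k)} \to x$ using compactness of the unit sphere in $\mathbb C^n$. Continuity of $\bA$ forces $\|\bA x\| = \|\bA\|$, hence $\bA^*\bA x = \|\bA\|^2 x$ since $\|\bA\|^2$ is the top eigenvalue of $\bA^*\bA$, and continuity of each sesquilinear form $\langle \cdot | A_j \cdot\rangle$ identifies $\lambda_j = \langle x | A_j x\rangle$.

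I do not foresee any real obstacle: the lemma is essentially a corollary of two external convexity theorems, together with the compactness step above, which is standard and where the finite-dimensional hypothesis is used in an essential way.
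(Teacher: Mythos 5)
Your proof is correct and coincides with the argument the paper gives in the paragraph immediately preceding the lemma: $W(\bA)$ is convex for doubly commuting matrix tuples by Bolotnikov--Rodman, convexity passes to $W_0(\bA)$ by Ch\={o}'s theorem, and $W_0(\bA)=\mathcal V(\bA)$ in finite dimensions. The compactness argument you supply for the identification $W_0(\bA)=\mathcal V(\bA)$ is correct and merely fills in a detail the paper asserts without proof; the paper does go on to give a second, more constructive proof of the same lemma via the explicit normal form of doubly commuting tuples (Lemmas~\ref{lemma2} and~\ref{lemma1}), but the short route you chose is exactly how the paper deduces the statement where it appears.
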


In Theorem 1.1 of \cite{rodman}, the authors gave a complete description of a doubly commuting tuple $\bA$ of matrices. They showed that there is a unitary matrix $U$, positive integers $m_1,\ldots,m_\ell$, $m_k\times m_k$ matrices $A_{1,k}, \ldots, A_{d,k}$ for each $1\leq k \leq \ell$ such that  $n=m_1+m_2+\dots + m_\ell$ and for $j=1,\ldots,d$,
\begin{equation}\label{chap5:55}UA_j U^*=\begin{bmatrix}A_{j,1}& 0& \ldots&0\\ 0& A_{j,2}& \ddots&0\\ \vdots&\vdots&\ddots&\vdots\\ 0&\ldots&0& A_{j,\ell}\end{bmatrix}.\end{equation} Moreover, for $1\leq k\leq \ell$,  $m_k=p_{1,k}p_{2,k}\dots p_{d,k}$ and \begin{eqnarray*} A_{1,k} &=& X_{1,k}\otimes I_{p_{2, k}}\otimes\dots\otimes I_{p_{d,k}},\nonumber\\
A_{2,k} &=& I_{p_{1,k}}\otimes X_{2,k}\otimes I_{p_{3, k}}\otimes\dots\otimes I_{p_{d,k}},\nonumber\\
& &\vdots\nonumber\\
A_{d,k} &=& I_{p_{1,k}}\otimes I_{p_{2, k}}\otimes\dots\otimes I_{p_{d-1,k}}\otimes X_{d,k}, \label{mkpjk}
\end{eqnarray*}
where for $1\leq j\leq d$, $X_{j,k}$ are $p_{j,k}\times p_{j,k}$ matrices and $I_{p_{j,k}}$ denote the $p_{j,k}\times p_{j,k}$ identity matrices. Using this, they proved that for doubly commuting tuples of matrices $\bA$, $W(\bA)$ is convex. Modifying their proof along with \cite[Proposition 4]{dashtensor} (which is also the main ingredient in \cite[Theorem 3.1]{rodman}), we show that
\begin{equation}\mathcal V(\boldsymbol{A})=\conv\left\{\prod\limits_{j=1}^d\mathcal V(X_{j, k}) :  1\leq k\leq \ell, \sum_{j=1}^d\|X_{j,k}\|^2=\|\boldsymbol{A}\|^2\right\}.\label{equation6}\end{equation} The convexity of $\mathcal V(\bA)$ (Lemma \ref{doubly}) follows as a consequence. We prove \eqref{equation6} in two steps given in the subsequent lemmas.

\begin{lemma}\label{lemma2}

Let \begin{eqnarray*}
A_1&=&X_1\otimes I_{p_2} \otimes \cdots\otimes I_{p_d},\\
& & \vdots\\
A_d&=&I_{p_1}\otimes I_{p_2} \otimes \cdots\otimes X_d,\end{eqnarray*} where each $X_j$ is a $p_j\times p_j$ matrix and $I_{p_j}$ is $p_j\times p_j$ identity matrix. Then
$\mathcal V(A_{1},\dots, A_{d})=\prod\limits_{j=1}^d\mathcal V(X_{j})$.

\end{lemma}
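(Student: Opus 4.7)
My plan is to identify the $\|\bA\|^2$-eigenspace of $\bA^*\bA$ explicitly as a tensor product, to handle one inclusion using simple tensors, and to handle the other using a reduced-density (partial trace) argument combined with the classical scalar Toeplitz--Hausdorff theorem.

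Since $A_j^*A_j = I_{p_1}\otimes\cdots\otimes X_j^*X_j\otimes\cdots\otimes I_{p_d}$, the operators $A_1^*A_1,\dots,A_d^*A_d$ commute pairwise and admit a simultaneous orthonormal eigenbasis consisting of simple tensors of eigenvectors of the $X_j^*X_j$'s. Consequently the spectrum of $\bA^*\bA=\sum_j A_j^*A_j$ consists of all sums $\sum_{j=1}^d\lambda_j$ with $\lambda_j\in\sigma(X_j^*X_j)$, so $\|\bA\|^2=\sum_{j=1}^d\|X_j\|^2$ and
\[\ker(\bA^*\bA-\|\bA\|^2 I)=E_1\otimes E_2\otimes\cdots\otimes E_d,\]
where $E_j$ denotes the $\|X_j\|^2$-eigenspace of $X_j^*X_j$. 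For the inclusion $\prod_j\mathcal V(X_j)\subseteq\mathcal V(\bA)$, I take unit vectors $x_j\in E_j$ and form $x=x_1\otimes\cdots\otimes x_d$; this unit vector lies in the eigenspace above, and $\langle x|A_j x\rangle=\langle x_j|X_j x_j\rangle$ for each $j$, so the desired coordinate tuple is attained.

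For the reverse inclusion, let $x$ be any unit vector with $\bA^*\bA x=\|\bA\|^2 x$, so $x\in E_1\otimes\cdots\otimes E_d$. For each $j$, let $\rho_j$ be the reduced density operator on the $j$-th factor, i.e.\ the partial trace of $xx^*$ over all tensor factors $i\neq j$. Expanding $x$ in a product orthonormal basis of $E_1\otimes\cdots\otimes E_d$ shows that $\rho_j\ge 0$, $\tr(\rho_j)=1$, and the range of $\rho_j$ is contained in $E_j$; a standard partial-trace identity gives $\langle x|A_j x\rangle=\tr(X_j\rho_j)$. Writing a spectral decomposition $\rho_j=\sum_m\mu_m u_m u_m^*$ with $u_m\in E_j$ orthonormal, $\mu_m\ge 0$, and $\sum_m\mu_m=1$, we get $\langle x|A_j x\rangle=\sum_m\mu_m\langle u_m|X_j u_m\rangle$, a convex combination of elements of $\mathcal V(X_j)$. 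Since $\mathcal V(X_j)$ is the numerical range of the compression of $X_j$ to $E_j$, the classical Toeplitz--Hausdorff theorem makes it a convex subset of $\bC$, so $\langle x|A_j x\rangle\in\mathcal V(X_j)$, and therefore $(\langle x|A_j x\rangle)_{j=1}^d\in\prod_j\mathcal V(X_j)$.

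The principal obstacle is the reverse direction: because $x$ need not be a simple tensor, the coordinate $\langle x|A_j x\rangle$ genuinely mixes contributions from all tensor factors, and the cleanest way I see to disentangle this is via the reduced-density representation, after which the scalar Toeplitz--Hausdorff theorem applied to each compression $X_j|_{E_j}$ closes the argument.
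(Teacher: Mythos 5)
Your proof is correct and follows essentially the same route as the paper: identify the top eigenspace of $\bA^*\bA$ as $E_1\otimes\cdots\otimes E_d$, use simple tensors for the inclusion $\prod_j\mathcal V(X_j)\subseteq\mathcal V(\bA)$, and for the reverse inclusion write $\langle x|A_j x\rangle$ as a convex combination of numerical values $\langle u|X_j u\rangle$ with $u\in E_j$ and appeal to Toeplitz--Hausdorff for the compression $X_j|_{E_j}$. The paper expands $x$ against a product basis on the factors $i\neq j$ and obtains the same convex combination via the coefficient vectors $f_\alpha\in E_j$, which is exactly your $\rho_j=\sum_\alpha f_\alpha f_\alpha^*$ before diagonalization; your partial-trace phrasing is cleaner, and you also make explicit the convexity of $\mathcal V(X_j)$ (via Toeplitz--Hausdorff) that the paper's closing step uses only implicitly.
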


\begin{proof} We have $$\sum\limits_{j=1}^d A_{j}^*A_{j} = \sum\limits_{j=1}^d I_{p_1}\tens{}\dots\tens{}I_{p_{j-1}}\tens{}(X_{p_j}^*X_{j})\tens{}I_{p_{j+1}}\dots\tens{}I_{p_d}.$$ So the eigenvalues of $\sum\limits_{j=1}^d A_{j}^*A_{j}$ are precisely the sums of the eigenvalues of $X_{j}^*X_{j}$ over $j$. This gives
\begin{equation}\left\|\sum\limits_{j=1}^d A_{j}^*A_{j}\right\|=\sum\limits_{j=1}^d\left\|X_{j}^*X_{j}\right\|.\label{new}\end{equation} 
%To see this, note that  \eqref{new}. % So $\left\|\sum\limits_{j=1}^d A_{j}^*A_{j}\right\|=\sum\limits_{j=1}^d\left\|X_{j}^*X_{j}\right\|$. 

Let $\left(\bra x_1|X_{1}x_{1}\ket, \ldots, \bra x_{d}|X_{d}x_{d}\ket\right)\in \prod\limits_{j=1}^d\mathcal V(X_{j})$. So $$X_{j}^*X_{j}x_j=\|X_{j}\|^2 x_{j}\text{ for all }1\leq j\leq d.$$ Thus \begin{align*}\sum\limits_{j=1}^d A_{j}^*A_{j}\ (x_1\otimes\cdots\otimes x_d)&=\sum\limits_{j=1}^d \|X_{j}\|^2 (x_1\otimes\cdots\otimes x_d)\\
&=\left\|\sum\limits_{j=1}^d A_{j}^*A_{j}\right\| (x_1\otimes\cdots\otimes x_d).\end{align*}
Since $$\bra x_j|X_{j}x_j\ket=\bra x_1\otimes\cdots\otimes x_d|A_{j}(x_1\otimes\cdots\otimes x_d)\ket \text{ for all }1\leq j\leq d,$$  we obtain $\left(\bra x_1|X_{1}x_1\ket, \ldots, \bra x_d|X_{d}x_d\ket\right)\in \mathcal V(A_{1},\dots, A_{d}).$

Now let $\left(\bra x| A_{1}x\ket,\ldots, \bra x|A_{d}x\ket\right)\in\mathcal V(A_{1},\dots, A_{d})$, where $\|x\|=1$ and $$\left(\sum\limits_{j=1}^dA_{j}^*A_{j}\right)x=\left\|\sum\limits_{j=1}^d A_{j}^*A_{j}\right\|x = \sum\limits_{j=1}^d\left\|X_{j}^*X_{j}\right\|x .$$
For $1\leq j\leq d$, let $\{e_{q}^{(j)}:q=1,\ldots,p_{j}\}$ be an orthonormal basis of $\bC^{p_{j}}$.
Fix $i\in\{1,\dots,d\}$ and let \begin{equation*} S_i =\big\{(q_1, \ldots, q_{i-1}, q_{i+1}, \ldots, q_d) : 1\leq q_j\leq p_j \text{ for all }j\in\{1,\dots, d\}\setminus\{i\}\big\}.\end{equation*} 
Then we have \begin{equation*}x=\sum\limits_{(q_1, \dots, q_{i-1}, q_{i+1}, \dots, q_d)\in S_i} e_{q_1}^{(1)}\otimes\dots\otimes e_{q_{i-1}}^{(i-1)}\otimes f_{(q_1, \dots, q_{i-1}, q_{i+1}, \dots, q_d)}
\otimes e_{q_{i+1}}^{(i+1)}\otimes\dots\otimes e_{q_d}^{(d)},\end{equation*} where $f_{(q_1, \dots, q_{i-1}, q_{i+1}, \dots, q_d)}\in \bC^{p_{i}}$ and
$$1=\|x\|^2=\sum_{\substack{\alpha\in S_i}} \|f_{\alpha}\|^2.$$
The condition $\left\|\left(\sum\limits_{j=1}^d A_{j}^*A_{j}\right)x\right\|=\sum\limits_{j=1}^d\|X_{j}^*X_{j}\|$ gives $$X_{i}^*X_{i}f_{\alpha}=\|X_{i}^*X_{i}\|f_{\alpha}\text{ for all }f_{\alpha} \in \bC^{p_{i}}\setminus \{0\}.$$

Since \begin{align*}\bra x|A_{i}x\ket &=\sum\limits_{\substack{\alpha\in S_i}}\bra f_{\alpha}|X_{i}f_{\alpha}\ket\\
& = \sum\limits_{\substack{\alpha\in S_i,\\ f_{\alpha}\neq 0 }} \|f_{\alpha}\|^2\left\bra \dfrac{f_{\alpha}}{\|f_{\alpha}\|}\bigg{|} X_{i}\dfrac{f_{\alpha}}{\|f_{\alpha}\|}\right\ket,\end{align*} it lies in $\mathcal V(X_{i})$. Hence $$\left(\bra x| A_{1}x\ket,\ldots, \bra x|A_{d}x\ket\right)\in\prod_{i=1}^d\mathcal V(X_{i}).$$ This gives $\mathcal V({A}_1,\ldots, A_d)=\prod\limits_{i=1}^d\mathcal V(X_{i})$.

\end{proof}
\begin{lemma}\label{lemma1}
For a doubly commuting tuple of matrices $\bA$, $$\mathcal V(\bA)=\conv\left\{\mathcal V(A_{1,k},\dots, A_{d,k}): 1\leq k\leq \ell, \left\|\sum_{j=1}^d A_{j,k}^*A_{j,k}\right\|=\|\boldsymbol{A}\|^2\right\},$$ where $A_{j,k}$ $(1\leq j\leq d, 1\leq k\leq\ell)$ are as given in \eqref{chap5:55}.
\end{lemma}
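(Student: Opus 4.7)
The plan is to use the simultaneous block-diagonalization \eqref{chap5:55} and reduce everything to an analysis of the top eigenspace of $\bA^*\bA$ block by block. Since conjugation by the unitary $U$ preserves $\|\bA\|$ and preserves $\mathcal V(\bA)$ (via $x \mapsto U^* x$), I will assume without loss of generality that each $A_j$ is itself block-diagonal with blocks $A_{j,1},\dots,A_{j,\ell}$. Then $\sum_{j=1}^d A_j^*A_j$ is block-diagonal with $k$-th block $B_k := \sum_{j=1}^d A_{j,k}^* A_{j,k}$, so $\|\bA\|^2 = \max_{1 \le k \le \ell}\|B_k\|$. Setting $K := \{k : \|B_k\| = \|\bA\|^2\}$, and writing any $x = \sum_k x_k$ with $x_k$ in the $k$-th block, the condition $\bA^*\bA x = \|\bA\|^2 x$ is equivalent to $x_k = 0$ for $k \notin K$ together with $B_k x_k = \|\bA\|^2 x_k$ for $k \in K$. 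This block split of the top eigenspace will drive both inclusions.

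For the inclusion $\mathcal V(\bA) \subseteq \conv\{\mathcal V(A_{1,k},\dots,A_{d,k}) : k \in K\}$, I will take a unit vector $x$ witnessing a point of $\mathcal V(\bA)$, decompose $x = \sum_k x_k$, and for each $k$ with $x_k \ne 0$ (which forces $k \in K$) set $\widetilde x_k := x_k / \|x_k\|$. Each such $\widetilde x_k$ is a unit eigenvector of $B_k$ for its top eigenvalue, so it witnesses a point of $\mathcal V(A_{1,k},\dots,A_{d,k})$. Block orthogonality then gives
$$\langle x | A_j x\rangle = \sum_{k : x_k \ne 0} \|x_k\|^2 \, \langle \widetilde x_k | A_{j,k}\, \widetilde x_k\rangle,$$
which exhibits $(\langle x | A_j x\rangle)_{j=1}^d$ as a convex combination of points in $\mathcal V(A_{1,k},\dots,A_{d,k})$ for $k \in K$, the weights $\|x_k\|^2 \ge 0$ summing to $\|x\|^2 = 1$.

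For the reverse inclusion, I will start with a convex combination $\sum_{k \in K} t_k (\lambda_1^{(k)},\dots,\lambda_d^{(k)})$ where $(\lambda_1^{(k)},\dots,\lambda_d^{(k)}) \in \mathcal V(A_{1,k},\dots,A_{d,k})$ is realized by a unit vector $y_k$ supported in the $k$-th block, and assemble $x := \bigoplus_{k \in K} \sqrt{t_k}\, y_k$. Then $\|x\|^2 = \sum_k t_k = 1$; since each $y_k$ lies in the $\|\bA\|^2$-eigenspace of $B_k$, $x$ lies in the $\|\bA\|^2$-eigenspace of $\bA^*\bA$; and block orthogonality yields $\langle x | A_j x\rangle = \sum_k t_k \lambda_j^{(k)}$, placing this tuple in $\mathcal V(\bA)$. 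I do not expect a serious obstacle here: the whole argument amounts to bookkeeping around the block decomposition, and the only point that needs care is confirming that the top eigenspace of $\bigoplus_k B_k$ splits cleanly as the direct sum of the top eigenspaces of the $B_k$ with $k \in K$, which is immediate from block-diagonality.
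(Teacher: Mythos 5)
Your proof is correct and follows essentially the same route as the paper's: reduce to the block-diagonal form via the unitary $U$, observe that the top eigenspace of $\bA^*\bA$ decomposes into the top eigenspaces of the blocks $B_k$ attaining $\|\bA\|^2$, and read off both inclusions by decomposing (respectively assembling) a top eigenvector block by block and using block orthogonality to express $\langle x|A_j x\rangle$ as a convex combination. The paper organizes the forward inclusion slightly differently (deriving $y_k=0$ for off-maximal blocks and the top-eigenvector property of $y_k$ via a norm inequality rather than stating the eigenspace splitting up front), but the substance is identical.
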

\begin{proof}
Let $x$ be a unit vector such that $\bA^* \bA x=\|\bA\|^2 x$. Let $y=Ux=\left(y_1,  \ldots, y_\ell\right).$
Fix $1\leq k\leq \ell$. Then
\begin{equation*}
\left(\sum_{j=1}^d A_{j,k}^*A_{j,k}\right)y_k=\|\bA\|^2 y_k. \label{eq3}
\end{equation*}
Note that $\left\|\sum_{j=1}^d A_{j,k}^*A_{j,k}\right\|\leq \|\boldsymbol{A^*A}\|$. Thus $y_k=0$ whenever $ \left\|\sum_{j=1}^d A_{j,k}^*A_{j,k}\right\|\neq\|\boldsymbol{A}\|^2$. Now for $1\leq k\leq \ell$ such that $y_k\neq 0$, we have \begin{align*}\|\bA\|^2\|y_k\|&=\left\|U\left(\sum_{j=1}^d A_j^* A_j\right)U^*\right\|\|y_k\|\\
&\geq\left\|\sum_{j=1}^d A_{j,k}^*A_{j,k}\right\|\|y_k\|\\&\geq\left\|\left(\sum_{j=1}^d A_{j,k}^*A_{j,k}\right)y_k\right\|.\end{align*} So
\begin{equation*}\left(\sum_{j=1}^d A_{j,k}^*A_{j,k}\right)y_k=
\left\|\sum_{j=1}^d A_{j,k}^*A_{j,k}\right\| y_k.\label{eq5}\end{equation*}
Also, for each $1\leq j\leq d$, $$\langle x|A_j x\rangle=\sum\limits_{\substack{1\leq k\leq\ell,\\ y_k\neq 0}}\|y_k\|^2\left\langle \dfrac{y_k}{\|y_k\|}\bigg| A_{j,k} \dfrac{y_k}{\|y_k\|}\right\rangle .$$ 
%$$\left(\langle x|A_1 x\rangle, \ldots, \langle x|A_d x\rangle\right)=\sum\limits_{k=1}^\ell\|y_k\|^2\left(\left\langle \dfrac{y_k}{\|y_k\|}\bigg| A_{1,k} \dfrac{y_k}{\|y_k\|}\right\rangle,\ldots, \left\langle \dfrac{y_k}{\|y_k\|}\bigg| A_{d,k} \dfrac{y_k}{\|y_k\|}\right\rangle\right).$$
Thus \begin{equation*}\mathcal V(\bA)\subseteq\conv\left\{\mathcal V(A_{1,k},\dots, A_{d,k}): 1\leq k\leq \ell, \left\|\sum_{j=1}^d A_{j,k}^*A_{j,k}\right\|=\|\boldsymbol{A}\|^2\right\}.\end{equation*}

Now let $$\boldsymbol{\lambda}\in \conv\left\{\mathcal V(A_{1,k},\dots, A_{d,k}): 1\leq k\leq \ell, \left\|\sum_{j=1}^d A_{j,k}^*A_{j,k}\right\|=\|\boldsymbol{A}\|^2\right\}.$$ We have $$\|\boldsymbol{A}\|^2 = \max\left\{\left\|\sum_{j=1}^d A_{j,1}^*A_{j,1}\right\|, \dots,\left\|\sum_{j=1}^d A_{j,\ell}^*A_{j,\ell}\right\|\right\}.$$ Let $m$ be the number of terms where this maximum is attained. Without loss of generality, let $1\leq k\leq m$ be such that $\left\|\sum_{j=1}^d A_{j,k}^*A_{j,k}\right\|=\|\boldsymbol{A}\|^2$. So there are positive numbers $s_1, \dots, s_m$ such that $\sum\limits_{k=1}^m s_k = 1$ and $\boldsymbol{\lambda}_k\in  \mathcal V(A_{1,k},\dots, A_{d,k})$ for all $1\leq k\leq m$ such that \begin{equation}\label{589}\boldsymbol{\lambda} = \sum\limits_{k=1}^m s_k \boldsymbol{\lambda}_k.\end{equation}

Since $\boldsymbol{\lambda}_k\in  \mathcal V(A_{1,k},\dots, A_{d,k})$, there exists a unit vector $y_k\in \mathbb C^{m_k}$ such that \begin{equation}\label{eq10}\boldsymbol{\lambda}_k = \left(\langle y_k|A_{1, k} y_k\rangle, \dots, \langle y_k|A_{d, k} y_k\rangle\right),\end{equation} and $$\left(\sum\limits_{j=1}^{d} A_{j, k}^*A_{j, k}\right)y_k = \left\|\sum\limits_{j=1}^{d} A_{j, k}^*A_{j, k}\right\|y_k=\|\boldsymbol{A^*A}\|y_k.$$ Consider  $x=U^*(\sqrt{s_1}y_1,\dots, \sqrt{s_m}y_m, 0, \dots, 0)$. Then $\|x\|^2=1.$ Now, we have \begin{align*}\boldsymbol{A^*A} x & =\sum\limits_{j=1}^d A_j^*A_j x\\
& = U^*\left(\sum\limits_{j=1}^d (UA_j U^*)^*(UA_j U^*) Ux\right)\\
& = U^* \begin{bmatrix}\sum\limits_{j=1}^d A_{j,1}^*A_{j,1}& 0& \ldots&0\\ 0& \sum\limits_{j=1}^d A_{j,2}^*A_{j,2}& \ddots&0\\ \vdots&\vdots&\ddots&\vdots\\ 0&\ldots&0& \sum\limits_{j=1}^d A_{j,\ell}^*A_{j,\ell}\end{bmatrix} \begin{bmatrix} \sqrt{s_1} y_1\\ \sqrt{s_2} y_2\\\vdots\\\sqrt{s_m}y_m\\ 0\\\vdots\\0\end{bmatrix}\\
& = U^* \left(\sqrt{s_1}\|\boldsymbol{A^*A}\| y_1, \dots, \sqrt{s_m}\|\boldsymbol{A^*A}\| y_m, 0, \dots, 0\right).
\end{align*}
This gives \begin{equation}\label{799} \boldsymbol{A^*A} x = \|\boldsymbol{A}\|^2 x.
\end{equation}
Further, we have \begin{align*} \langle x| A_j x\rangle & = \langle Ux| UA_j U^* (Ux)\rangle\\
    & = \left\langle \left(\sqrt{s_1} y_1, \dots, \sqrt{s_m} y_m, 0, \dots, 0\right)| \left(\sqrt{s_1} A_{j, 1} y_1, \dots, \sqrt{s_m} A_{j, m} y_m, 0, \dots, 0\right)\right\rangle \\
    & = \sum\limits_{k=1}^m s_k \langle y_k | A_{j, k} y_k\rangle.
\end{align*} So by \eqref{589}, \eqref{eq10} and \eqref{799}, we get that $\boldsymbol{\lambda}\in\mathcal V(\boldsymbol{A})$. Thus \begin{equation*}\conv\left\{\mathcal V(A_{1,k},\dots, A_{d,k}): 1\leq k\leq \ell, \left\|\sum_{j=1}^d A_{j,k}^*A_{j,k}\right\|=\|\boldsymbol{A}\|^2\right\}\subseteq\mathcal V(\bA).\end{equation*} Hence the result.
\end{proof}

%\begin{prop}\label{orthogonality}
%For a doubly commuting tuple of matrices $\bA$, $$\|\bA-\z \I\|\geq \|\bA\| \text{ for all } \z\in \mathbb C^d$$ if and only if there exists $x\in \bC^n$ such that $\|x\|=1$, $\|\bA x\|=\|\bA\|$ and $\langle x| A_j x \rangle=0$ for all $j=1,\ldots,d$. \end{prop}
%\begin{proof}
%Suppose there exists $x\in \bC^n$ such that $\|x\|=1$, $\|\bA x\|=\|\bA\|$ and $\langle x| A_j x \rangle=0$ for all $j=1,\ldots,d$. Then
%\begin{eqnarray*}
%\|\bA-\z\I\|^2 &\geq & \|(\bA-\z\I)x\|^2\\
%&=& \|\bA x\|^2-2 \h\big( \sum_{j=1}^d \overline{z_j} \langle x| A_j x\rangle\big)+\sum_{j=1}^d |z_j|^2\\
%&=& \|\bA\|^2+\sum_{j=1}^d |z_j|^2\\
%&\geq& \|\bA\|^2.
%\end{eqnarray*}
%The converse follows by applying Theorem \ref{orthogonality1} and Lemma \ref{doubly}.\end{proof} 
%We remark that for a general tuple of matrices, a comparable characterization was given in \cite[Corollary 3.4]{2013} when the $z_i$'s are taken to be equal.

Now we show that \eqref{main2} holds for a doubly commuting tuple of matrices.
\begin{thm}\label{main thm}
For a doubly commuting tuple of matrices, we have
$$\dist  (\bA, \mathbb C^d \I)^2=\max_{\|x\|=1}\var_x (\bA).
$$
\end{thm}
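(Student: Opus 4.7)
The plan is to reduce the statement to Corollary \ref{orthogonality} applied to the shifted tuple $\bA^0 = \bA - \z^0\I$. First, I observe that subtracting scalar multiples of the identity preserves doubly commutativity: the relations $A_i A_j = A_j A_i$ and $A_i^* A_j = A_j A_i^*$ pass to $A_i^0, A_j^0$ unchanged. By the defining minimality of $\z^0$,
$$\|\bA^0 - \z\I\| = \|\bA - (\z^0+\z)\I\| \geq \|\bA - \z^0 \I\| = \|\bA^0\|$$
for every $\z \in \bC^d$, so $\bA^0$ fulfills the left-hand condition in Corollary \ref{orthogonality}.

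Next, since $\bA^0$ is doubly commuting, Lemma \ref{doubly} tells us that $\mathcal V(\bA^0)$ is convex, so Corollary \ref{orthogonality} applies and produces a unit vector $x \in \bC^n$ with
$$\|\bA^0 x\| = \|\bA^0\| \quad \text{and} \quad \langle x | A_j^0 x\rangle = 0 \text{ for each } 1 \leq j \leq d.$$
Equivalently, $\langle x | A_j x\rangle = z_j^0$ for every $j$, and $\sum_{j=1}^d \|A_j^0 x\|^2 = \|\bA^0\|^2$.

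Now I use the translation identity for variance already exploited in the derivation of \eqref{main3}: for any scalar $z \in \bC$,
$$\|A_j x\|^2 - |\langle x | A_j x\rangle|^2 = \|(A_j - z I) x\|^2 - |\langle x | (A_j - z I) x\rangle|^2,$$
as a direct expansion verifies. Taking $z = z_j^0$ kills the second term on the right, and summing over $j$ gives
$$\var_x(\bA) = \sum_{j=1}^d \|A_j^0 x\|^2 = \|\bA^0 x\|^2 = \|\bA^0\|^2 = \dist(\bA, \bC^d \I)^2.$$
Combining this with the reverse inequality \eqref{main3} yields the claimed equality, with the supremum actually attained at $x$.

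The only nontrivial ingredient is the appeal to Corollary \ref{orthogonality}, which in turn rests on the convexity of $\mathcal V(\bA^0)$ from Lemma \ref{doubly} (and, ultimately, on the structural decomposition \eqref{equation6}). I do not expect any real obstacle beyond that: verifying that the shift $\bA \mapsto \bA^0$ preserves double commutativity and pushing the variance identity through the translation are both routine.
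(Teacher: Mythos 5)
Your proof is correct and follows essentially the same route as the paper's: invoke \eqref{main3} for one direction, then apply Lemma \ref{doubly} and Corollary \ref{orthogonality} to the shifted tuple $\bA^0$ to produce a unit vector where the variance attains $\|\bA^0\|^2$. The only (minor) addition is your explicit verification that $\bA^0$ remains doubly commuting, a step the paper uses implicitly.
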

\begin{proof}
By \eqref{main3}, we have $\max\limits_{\|x\|=1}\var\limits_x (\bA)\leq \dist  (\bA, \mathbb C^d \I)^2$. For the other side, we note that %let $\z_0\in \mathbb C^d$ be such that $\dist(\bA, \mathbb C^d \I)=\|\bA-\z^0 \I\|$. Let $\bA^0=\bA-\z^0 \I$. Then 
for every $\z\in \mathbb C^d$, $\|\bA^0-\z \I\|\geq \|\bA^0\|$. By Corollary \ref{orthogonality} and Lemma \ref{doubly}, there exists a unit vector $x$  such that \begin{equation*}\|\textstyle\bA^{0} x\|=\|\textstyle\bA^0\| \label{4}\end{equation*} and \begin{equation*}\langle x|\textstyle A^0_j x\rangle=0 \text{ for all }j=1,\ldots, d.\label{5}\end{equation*}
So \begin{eqnarray*}
\dist(\bA, \mathbb C^d \I)^2&=& \|\textstyle\bA^0 x\|^2\\
&=& \sum_{j=1}^d \|A^0_j x\|^2\\
&=&\sum_{j=1}^d \|A_j x-z^0_j x\|^2\\
&=&\sum_{j=1}^d \big(\|A_j x\|^2-\big|\langle x|A_j x \rangle\big|^2\big).
\end{eqnarray*}
Hence $\dist(\bA, \mathbb C^d \I) ^2 \leq \max\limits_{\|x\|=1}\var\limits_x (\bA) $.\end{proof}

Theorem 3.2 of \cite{bhatiasharma} follows as a special case of Theorem \ref{main thm}.

\section{Other tuples of operators}\label{section2}

First, we observe that for $d=1$, \eqref{main2} is true for any operator on any Hilbert space. To see this, in view of \eqref{main3}, only one side of the inequality is to be shown. Let $\lambda_0\in\bC$ be such that $\dist  (A, \mathbb C I)=\|A-\lambda_0 I\|$. Using Remark 3.1 of \cite{Bhatia}, there exists a sequence of unit vectors $x^{(n)}\in\mathcal H$ such that $\|(A-\lambda_0 I)x^{(n)}\|\rightarrow \dist  (A, \mathbb C I)$ and $\bra x^{(n)}|A x^{(n)}\ket\rightarrow \lambda_0$. So  
\begin{align*}
\dist(A, \mathbb C I)^2&= \lim_{n\rightarrow \infty}\|(A-\lambda_0 I) x^{(n)}\|^2\\
&=  \lim_{n\rightarrow \infty}\left(\|Ax^{(n)}\|^2-2\h\left(\lambda_0\langle A x^{(n)}|x^{(n)} \rangle\right) + |\lambda_0|^2\right)\\
&=\lim_{n\rightarrow \infty}\|Ax^{(n)}\|^2-|\lambda_0|^2\\
&=\lim_{n\rightarrow \infty} \var_{x^{(n)}} (A)\\
&\leq \sup_{\|x\|=1}\var_x (A).
\end{align*}

Equation \eqref{main2} may not hold for $d>1$. To show this, we give  examples for every such $d$. These are motivated by the example of Pauli spin matrices for the case $d=3$.

\textbf{Example 1}. Let  $\bA=(A_1, A_2, A_3)$, where $A_1=\begin{bmatrix}0&1\\ 1& 0 \end{bmatrix},A_2=\begin{bmatrix}0&-i\\ i& 0 \end{bmatrix},$ and $ A_3= \begin{bmatrix}1&0\\ 0& -1 \end{bmatrix}.$ Then for any $\z=(z_1, z_2, z_3)\in\bC^3$, 
\begin{align*}\|\bA-\z \I\|^2 &=\left\|\left(\begin{bmatrix}-z_1&1\\ 1& -z_1 \end{bmatrix},\begin{bmatrix}-z_2&-i\\ i& -z_2 \end{bmatrix} , \begin{bmatrix}1-z_3&0\\ 0& -1-z_3 \end{bmatrix}\right)\right\|^2\\
&=\left\|\begin{bmatrix}|z_1|^2+|z_2|^2+2+|1-z_3|^2&-(z_1+\overline{z_1})+i(z_2+\overline{z_2})\\ -(z_1+\overline{z_1})-i(z_2+\overline{z_2})& |z_1|^2+|z_2|^2+2+|1+z_3|^2\end{bmatrix}\right\|\\
&\geq \max\{|z_1|^2+|z_2|^2+2+|1-z_3|^2, |z_1|^2+|z_2|^2+2+|1+z_3|^2\}\\
%&\geq \max\{2+|1-z_3|^2, 2+|1+z_3|^2\}\\
&> 2.
\end{align*}
Now let $x=(x_1,x_2)\in\bC^2$. Then $$A_1x=(x_2,x_1), A_2x=(-ix_2, ix_1) \text{ and }A_3 x=(x_1, -x_2).$$ So $\|\bA x\|^2=3(|x_1|^2+|x_2|^2)$ and 
$$\sum\limits_{j=1}^3\big|\bra x|A_jx\ket\big|^2=4|x_1|^2|x_2|^2+(|x_1|^2-|x_2|^2)^2=(|x_1|^2+|x_2|^2)^2.$$ Hence for a unit vector $x\in\bC^2$, $\var\limits_{x} (\bA)= 2$. So $\dist(\bA, \bC^2\I)^2>\max\limits_{\|x\|=1}\var\limits_{x} (\bA)$.

Let $I$ be the $2\times 2$ identity matrix. For $d>3$, it is easy to see that $(A_1, A_2, A_3, I, \dots, I)$ works as a counterexample. 

For $d=2$, $\boldsymbol{A}=\left(\dfrac{1}{2}(A_1+iA_2), A_3\right)$ is a counterexample. We have $$\boldsymbol{A}= \left(\begin{bmatrix}0&1\\ 0& 0 \end{bmatrix}, \begin{bmatrix}1&0\\ 0& -1 \end{bmatrix}\right).$$ Then for any $\boldsymbol{z}=(z_1, z_2)\in\mathbb C^2$, 
\begin{align*}\|\bA-\z \I\|^2 &=\left\|\left(\begin{bmatrix}-z_1&1\\ 0& -z_1 \end{bmatrix}, \begin{bmatrix}1-z_2&0\\ 0& -1-z_2 \end{bmatrix}\right)\right\|^2\\
&=\left\|\begin{bmatrix}|z_1|^2+|1-z_2|^2&-\overline{z_1}\\ -z_1& |z_1|^2+1+|1+z_2|^2\end{bmatrix}\right\|\\
&\geq \max\{|1-z_2|^2, 1+ |1+z_2|^2\}\\
&\geq \max\{2-|1+z_2|^2, 1+|1+z_2|^2\}\\
&\geq 3/2.
\end{align*}
Now let $x=(x_1,x_2)\in\bC^2$. Then $$\begin{bmatrix}0&1\\ 0& 0 \end{bmatrix}x=(x_2,0), \begin{bmatrix}1&0\\ 0& -1 \end{bmatrix}x=(x_1, -x_2).$$ So $\|\bA x\|^2=|x_2|^2+|x_1|^2+|x_2|^2$ and 
$$\left|\left\bra x\bigg|\begin{bmatrix}0&1\\ 0& 0 \end{bmatrix}x\right\ket\right|^2+\left|\left\bra x\bigg| \begin{bmatrix}1&0\\ 0& -1 \end{bmatrix}x\right\ket\right|^2=|x_1|^2|x_2|^2+(|x_1|^2-|x_2|^2)^2.$$ Thus \begin{align*}\max_{\|x\|=1}\var\limits_{x} (\bA) &= \max_{\|x\|=1}\left(1+|x_2|^2-|x_1|^2|x_2|^2-(|x_1|^2-|x_2|^2)^2\right)\\
&=\max_{\|x\|=1}\left(1+|x_2|^2+3|x_1|^2|x_2|^2-(|x_1|^2+|x_2|^2)^2\right)\\
&= \max_{\|x\|=1}|x_2|^2(1+3|x_1|^2)\\
&= \max_{s\in [0,1]}(1-s)(1+3s)\\
& = 4/3\\
& < 3/2.\end{align*}  So $\dist(\bA, \bC^2\I)^2>\max\limits_{\|x\|=1}\var\limits_{x} (\bA).$

We also note that when $d=1$,
\begin{equation*}\label{new*} \dist({A}, \mathbb C {I}) =\sup\left\{\big|\bra {A}x| {y}\ket\big| : x,y\in\Hil, \|x\|=\|{y}\|=1, 
\bra x|y\ket =0\right\}.\end{equation*}
A proof of this can be found in \cite[Theorem 12.59]{Twopages} or \cite[Proposition 2.11]{LiRodman}.
%For a tuple of operators $\bA$ on $\Hil$, let $\boldsymbol{z^0}=(z_1^0,\dots, z_d^0)\in\bC^d$ be the unique element for which $\dist(\boldsymbol{A}, \bC^d \boldsymbol{I}) = \|\boldsymbol{A- z^0I}\|$. Let %$\boldsymbol{A^0} =\boldsymbol{A- z^0I}$ and $A_j^0 = A_j-z_j^0I$ for all $1\leq j\leq d$.
  
\begin{thm}\label{00010}  The following are equivalent.

\begin{enumerate}[label=(\roman*)]

\item $\dist(\boldsymbol{A}, \bC^d\boldsymbol{I})^2 =  \sup\limits_{\substack{\|x\|=1}} \var\limits_x(\bA).$\label{two}
\item $\boldsymbol{0}\in W_0(\bA^0)$, that is, there is a sequence of unit vectors $(x^{(n)})$ such that $\lim\limits_{n\rightarrow\infty}\|\boldsymbol{A^0}x^{(n)}\|= \|\boldsymbol{A^0}\|$ and $\lim\limits_{n\rightarrow\infty}\bra x^{(n)}|A_j^0 x^{(n)}\ket = 0$ for all $1\leq j\leq d$. \label{one}
\item $\dist(\boldsymbol{A}, \bC^d\boldsymbol{I}) =\sup\left\{\big|\bra \boldsymbol{A}x| \boldsymbol{y}\ket\big| : x\in\Hil, \boldsymbol{y}\in\Hil^d, \|x\|=\|\boldsymbol{y}\|=1, \right.\\
\bra x|y_j\ket =0 \text{ for all } 1\leq j\leq d\big\}.$\label{three}
\end{enumerate}
\end{thm}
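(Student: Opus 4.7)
The plan is to prove (i)$\Leftrightarrow$(ii) and (ii)$\Leftrightarrow$(iii), leaning throughout on the fact that variance is translation invariant together with a Cauchy--Schwarz analysis for the equivalence with (iii).

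For (i)$\Leftrightarrow$(ii), the key observation is that for every unit vector $x\in\Hil$ and every $\z\in\bC^d$ one has $\var_x(\bA)=\var_x(\bA-\z\I)$. Specializing to $\z=\z^0$ rewrites this as
$$\var_x(\bA)=\|\bA^0 x\|^2-\sum_{j=1}^d\big|\bra x|A_j^0 x\ket\big|^2.$$
Since $\|\bA^0 x\|^2\le\|\bA^0\|^2=\dist(\bA,\bC^d\I)^2$ and the subtracted sum is non-negative, this already reproves \eqref{main3}, and moreover the supremum of $\var_x(\bA)$ over unit vectors equals $\dist(\bA,\bC^d\I)^2$ precisely when there is a sequence of unit vectors $x^{(n)}$ along which $\|\bA^0 x^{(n)}\|\to\|\bA^0\|$ and $\bra x^{(n)}|A_j^0 x^{(n)}\ket\to 0$ for each $j$, which is exactly the condition $\boldsymbol{0}\in W_0(\bA^0)$.

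For (ii)$\Rightarrow$(iii), I first note the easy upper bound: any admissible pair $(x,\y)$ in (iii) satisfies $|\bra\bA x|\y\ket|=|\bra(\bA-\z\I)x|\y\ket|\le\|\bA-\z\I\|$ for every $\z\in\bC^d$ (using $\bra x|y_j\ket=0$ and Cauchy--Schwarz), and minimizing over $\z$ bounds the supremum in (iii) by $\dist(\bA,\bC^d\I)$. For the reverse bound, assuming $\bA\notin\bC^d\I$ (otherwise everything is trivial), given a sequence $x^{(n)}$ as in (ii) set $\y^{(n)}=\bA^0 x^{(n)}/\|\bA^0 x^{(n)}\|$ and define its coordinatewise orthogonalization $\boldsymbol{v}^{(n)}$ by $v_j^{(n)}=y_j^{(n)}-\bra x^{(n)}|y_j^{(n)}\ket x^{(n)}$. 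Since $\bra x^{(n)}|y_j^{(n)}\ket=\bra x^{(n)}|A_j^0 x^{(n)}\ket/\|\bA^0 x^{(n)}\|\to 0$, direct computation gives $\|\boldsymbol{v}^{(n)}\|\to 1$ and $|\bra\bA x^{(n)}|\boldsymbol{v}^{(n)}\ket|\to\|\bA^0\|$, and normalizing $\boldsymbol{v}^{(n)}$ produces an admissible sequence realizing $\dist(\bA,\bC^d\I)$ in the limit.

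For (iii)$\Rightarrow$(ii), take sequences of unit vectors $(x^{(n)},\y^{(n)})$ with $\bra x^{(n)}|y_j^{(n)}\ket=0$ for all $j$ and $|\bra\bA x^{(n)}|\y^{(n)}\ket|\to\dist(\bA,\bC^d\I)$. Orthogonality gives $\bra\bA x^{(n)}|\y^{(n)}\ket=\bra\bA^0 x^{(n)}|\y^{(n)}\ket$, and Cauchy--Schwarz yields $|\bra\bA^0 x^{(n)}|\y^{(n)}\ket|\le\|\bA^0 x^{(n)}\|\le\|\bA^0\|$, so asymptotic tightness of both inequalities forces $\|\bA^0 x^{(n)}\|\to\|\bA^0\|$ and, setting $\eta^{(n)}=\bA^0 x^{(n)}/\|\bA^0 x^{(n)}\|$, also $\y^{(n)}-\bra\eta^{(n)}|\y^{(n)}\ket\,\eta^{(n)}\to\boldsymbol{0}$ in $\Hil^d$ with $|\bra\eta^{(n)}|\y^{(n)}\ket|\to 1$. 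Taking the inner product of $x^{(n)}$ with the $j$-th coordinate of this vanishing difference and invoking $\bra x^{(n)}|y_j^{(n)}\ket=0$ yields $\bra\eta^{(n)}|\y^{(n)}\ket\bra x^{(n)}|\eta_j^{(n)}\ket\to 0$, and since $|\bra\eta^{(n)}|\y^{(n)}\ket|\to 1$ this forces $\bra x^{(n)}|A_j^0 x^{(n)}\ket\to 0$ for each $j$, giving (ii). The main obstacle is the quantitative step converting asymptotic equality in Cauchy--Schwarz into genuine near-parallelism of $\y^{(n)}$ with $\eta^{(n)}$ in $\Hil^d$, strong enough that the exact coordinatewise orthogonality $\bra x^{(n)}|y_j^{(n)}\ket=0$ can be combined with this near-parallelism to force componentwise vanishing of the inner products $\bra x^{(n)}|A_j^0 x^{(n)}\ket$.
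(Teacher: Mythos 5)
Your proof is correct and follows essentially the same route as the paper's: translation invariance of variance together with the squeeze $\var_x(\bA^0)\leq\|\bA^0 x\|^2\leq\|\bA^0\|^2$ for (i)$\Leftrightarrow$(ii), the orthogonal decomposition of $\bA^0 x^{(n)}$ against $x^{(n)}$ for (ii)$\Rightarrow$(iii), and near-equality in Cauchy--Schwarz plus the coordinatewise orthogonality $\bra x^{(n)}|y_j^{(n)}\ket=0$ for (iii)$\Rightarrow$(ii). A small merit of your version of (iii)$\Rightarrow$(ii) is that retaining the complex coefficient $\bra\eta^{(n)}|\y^{(n)}\ket$ (rather than asserting $\y^{(n)}-\bA^0 x^{(n)}/\|\bA^0 x^{(n)}\|\to 0$ directly) sidesteps the phase normalization that the paper leaves implicit.
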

\begin{proof}  Suppose $\labelcref{two}$ holds. Since $\var\limits_x(\bA)=\var\limits_x(\bA^0)$, we have \begin{equation*}\label{109} \|\boldsymbol{A^0}\|^2 = \sup_{\|x\|=1}\var_x(\textstyle\bA^0).\end{equation*} So there is a sequence of unit vectors $(x^{(n)})$ such that $\lim\limits_{n\rightarrow \infty}\var\limits_{x^{(n)}}(\textstyle\bA^0)=\|\boldsymbol{A^0}\|^2$. Since $$\var\limits_{x^{(n)}}(\textstyle\bA^0)\leq \|\bA^0 x^{(n)}\|^2\leq \|\bA^0\|^2,$$ we get $\labelcref{one}$. The proof of $\labelcref{one}$ implies $\labelcref{two}$ is similar to the case $d=1$. 

Now let us assume $\labelcref{one}$. 
Then for each $1\leq j\leq d$, there is a unique $y_j^{(n)}\in(\bC x^{(n)})^{\perp}$ and a unique $\alpha_j^{(n)}\in\bC$ such that $$A^0_jx^{(n)} = \alpha_j^{(n)} x^{(n)}+y_j^{(n)}.$$ By the assumption,  $\lim\limits_{n\rightarrow\infty}\alpha_j^{(n)} = 0$ and $\lim\limits_{n\rightarrow\infty}\|(y_1^{(n)}, \dots, y_d^{(n)})\| =  \|\boldsymbol{A^0}\|$.
Let $\boldsymbol{y^{(n)}} = \dfrac{(y_1^{(n)}, \dots, y_d^{(n)})}{\|(y_1^{(n)}, \dots, y_d^{(n)})\|}$. Then
\begin{align*}\bra \boldsymbol{A}x^{(n)}| \boldsymbol{y^{(n)}}\ket & = \big\bra \boldsymbol{A^0}x^{(n)}\big| \boldsymbol{y^{(n)}}\big\ket \\
& = \dfrac{\|\boldsymbol{A^0}x^{(n)}\|^2 - \big\bra \boldsymbol{A^0}x^{(n)}\big|(\alpha_1^{(n)}x^{(n)}, \dots, \alpha_d^{(n)}x^{(n)})\big\ket}{\|(y_1^{(n)}, \dots, y_d^{(n)})\|}\\
&\rightarrow \|\boldsymbol{A^0}\| \text{ as } n\rightarrow\infty.\end{align*} This gives $$\dist(\boldsymbol{A}, \bC^d\boldsymbol{I}) \leq\sup\left\{\big|\bra \boldsymbol{A}x| \boldsymbol{y}\ket\big| : \|x\|=\|\boldsymbol{y}\|=1, \bra x|y_j\ket =0 \text{ for all } 1\leq j\leq d\right\}.$$ If $x\in\mathcal H$ and $\boldsymbol{y}\in\mathcal H^d$ are such that $\|x\|=\|\boldsymbol{y}\|=1$ and $\bra x|y_j\ket =0$ for all $ 1\leq j\leq d$, then for every $\boldsymbol{z}\in\mathbb C^d$, we have $$\big|\bra \boldsymbol{A}x| \boldsymbol{y}\ket\big| = \big|\bra (\boldsymbol{A-zI})x| \boldsymbol{y}\ket\big|\leq \|\boldsymbol{A-zI}\|.$$ So we also have $$\sup\left\{\big|\bra \boldsymbol{A}x| \boldsymbol{y}\ket\big| : \|x\|=\|\boldsymbol{y}\|=1, \bra x|y_j\ket =0 \text{ for all } 1\leq j\leq d\right\}\leq \dist(\boldsymbol{A}, \bC^d\boldsymbol{I}).$$ This gives $\labelcref{three}$.

Now assume $\labelcref{three}$ holds. We prove $\labelcref{one}$ along the lines of Remark 3.1 of \cite{Bhatia} where a similar result is shown for $d=1$. By our assumption, we get sequences of unit vectors $x^{(n)}\in\Hil$ and $\boldsymbol{y^{(n)}}\in\Hil^d$ such that $$\big|\bra \boldsymbol{A}^0 x^{(n)}|\boldsymbol{y^{(n)}}\ket\big|\rightarrow \|\boldsymbol{A}^0\|$$ and $$\bra x^{(n)}|y^{(n)}_j\ket =0\text{ for all } j=1,\dots, d.$$ Since $$\big|\bra \boldsymbol{A}^0 x^{(n)}|\boldsymbol{y^{(n)}}\ket\big|\leq \|\boldsymbol{A}^0 x^{(n)}\|\leq \|\boldsymbol{A}^0\|,$$ we get $$\lim\limits_{n\rightarrow \infty} \|\boldsymbol{A}^0 x^{(n)}\|=\|\boldsymbol{A}^0\|.$$ This gives $\boldsymbol{y^{(n)}}-\dfrac{\bA^0 x^{(n)}}{\|\bA^0 x^{(n)}\|}\rightarrow 0$. So $$\lim\limits_{n\rightarrow \infty}\bra x^{(n)}|A_j^0 x^{(n)}\ket =\|\boldsymbol{A}^0\|\lim\limits_{n\rightarrow \infty}\bra x^{(n)}|y^{(n)}_j\ket =0.$$ Thus we get $\labelcref{one}$.
\end{proof}

We have the following general result to Corollary \ref{orthogonality} for a tuple of operators $\bA$ for which $W_0(\bA)$ is convex.

\begin{prop}\label{orthogonality2}
Suppose $W_0(\bA)$ is convex. Then $$\|\bA-\z \I\|\geq \|\bA\|\text{ for all } \z\in \mathbb C^d$$ if and only if $\boldsymbol{0}\in W_0(\bA)$. \end{prop}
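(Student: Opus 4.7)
The plan is to prove the easy direction directly from the definition of $W_0(\bA)$, and the harder direction by contrapositive using a hyperplane separation argument that exploits the assumed convexity.

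For the ``if'' direction, assuming $\boldsymbol{0}\in W_0(\bA)$, I would pick a sequence of unit vectors $(x^{(n)})$ with $\|\bA x^{(n)}\|\to \|\bA\|$ and $\langle x^{(n)}|A_jx^{(n)}\rangle\to 0$ for every $j$. For any fixed $\z\in\bC^d$, expand
\[
\|(\bA-\z\I)x^{(n)}\|^2 = \|\bA x^{(n)}\|^2 - 2\h\sum_{j=1}^d\overline{z_j}\langle x^{(n)}|A_jx^{(n)}\rangle + \|\z\|^2,
\]
which tends to $\|\bA\|^2 + \|\z\|^2 \geq \|\bA\|^2$. Since $\|\bA-\z\I\|\geq \|(\bA-\z\I)x^{(n)}\|$, the inequality follows. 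This part does not use convexity.

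For the ``only if'' direction, I would argue by contrapositive. First observe that $W_0(\bA)$ is a bounded closed (hence compact) subset of $\bC^d$: it is bounded since $|\langle x|A_jx\rangle|\leq \|A_j\|$, and a standard diagonal extraction gives closedness. Assuming convexity, if $\boldsymbol{0}\notin W_0(\bA)$, the hyperplane separation theorem produces a nonzero $\boldsymbol{w}\in\bC^d$ and $c>0$ with $\h\sum_j\overline{w_j}\lambda_j\geq c$ for every $(\lambda_1,\dots,\lambda_d)\in W_0(\bA)$.

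The key technical step is to upgrade this separation to a uniform lower bound near the norm: there exists $\delta>0$ such that whenever a unit vector $x$ satisfies $\|\bA x\|^2\geq \|\bA\|^2-\delta$, one has $\h\sum_j\overline{w_j}\langle x|A_j x\rangle\geq c/2$. If this failed, a sequence of counterexamples would, after extracting so that $\langle x^{(n)}|A_j x^{(n)}\rangle$ converges for each $j$, yield an element of $W_0(\bA)$ violating the separation. Given the claim, I would take $\z = t\boldsymbol{w}$ and split the supremum defining $\|\bA-t\boldsymbol{w}\I\|^2$ into two regimes. Writing $g_t(x)=\|\bA x\|^2-2t\h\sum_j\overline{w_j}\langle x|A_j x\rangle+t^2\|\boldsymbol{w}\|^2$, Cauchy--Schwarz gives $g_t(x)\leq \|\bA\|^2-\delta+2t\|\boldsymbol{w}\|\,\|\bA\|+t^2\|\boldsymbol{w}\|^2$ on the ``far'' regime $\|\bA x\|^2\leq \|\bA\|^2-\delta$, while the uniform claim gives $g_t(x)\leq \|\bA\|^2-tc+t^2\|\boldsymbol{w}\|^2$ on the complementary regime. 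For sufficiently small $t>0$, both bounds are strictly less than $\|\bA\|^2$, so $\|\bA-t\boldsymbol{w}\I\|<\|\bA\|$, contradicting the hypothesis.

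The main obstacle is the uniform ``bound near the norm'' claim: a mere pointwise directional decrease of the variance-like functional at a single near-maximizer is not enough to push the operator norm below $\|\bA\|$, since the supremum could be witnessed along an entirely different sequence. Once this uniformity is secured via compactness of $W_0(\bA)$ and the extraction argument, the rest is a routine two-case estimate.
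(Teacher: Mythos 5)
Your proof is correct and follows essentially the same approach as the paper's (which itself adapts Stampfli's argument for $d=1$): the easy direction is a direct expansion of $\|(\bA-\z\I)x^{(n)}\|^2$, and the hard direction is a contrapositive using Hahn--Banach separation on the closed convex set $W_0(\bA)$ followed by a two-regime estimate on a small shift of $\bA$ in the separating direction. Your explicit $\delta$-uniformity claim near the norm, justified by subsequence extraction, is exactly (the contrapositive of) the step the paper dispatches with ``Clearly $\eta<\|\bA\|$'' after introducing the set $\mathcal S$, so your write-up actually makes that point clearer; the remaining differences (a free parameter $t$ versus the paper's fixed $\mu$, and slightly different conjugation conventions in the separating functional) are cosmetic.
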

\begin{proof} The proof is similar to that of \cite[Theorem 2]{derivation} where this result is proved for $d=1$. We provide the details for the sake of completeness. Suppose there exists a sequence of unit vectors $x^{(n)}$ such that  $\lim\limits_{n\rightarrow \infty}\|\bA x^{(n)}\|=\|\bA\|$ and $\lim\limits_{n\rightarrow \infty}\langle x^{(n)}|A_j x^{(n)} \rangle= 0$ for all $j=1,\ldots,d$. Then for $\z\in\bC^d$, \begin{eqnarray*}\|(\bA-\z\I)x^{(n)}\|^2&=&\|\bA x^{(n)}\|^2-2\h\left(\sum\limits_{j=1}^d z_j\bra x^{(n)}|A_j x^{(n)}\ket\right)+\|\z\|^2\\
&\rightarrow& \|\bA\|^2+\|\z\|^2 \text{ as }n\rightarrow \infty.\end{eqnarray*} Thus $\|\bA-\z\I\|^2\geq \|\bA\|^2+\|\z\|^2$. In particular, $\|\bA-\z\I\| \geq\|\bA\|$.

Suppose $\|\bA-\z \I\|\geq \|\bA\|\text{ for all } \z\in \mathbb C^d$ and $\boldsymbol{0}\notin W_0(\bA)$. Since $W_0(\bA)$ is a closed and convex set, the Hahn-Banach separation theorem gives a $\tau>0$ and a unit vector $(w_1, w_2, \dots, w_d)\in\bC^d$ such that $$\h\sum\limits_{j=1}^dw_i\lambda_i\geq\tau>0 \text{ for all } (\lambda_1, \dots, \lambda_d)\in W_0(\bA).$$ Let $\mathcal S=\left\{x\in\Hil : \|x\|=1 \text{ and }\h\left(\sum\limits_{j=1}^dw_i\bra A_i x|x\ket\right)\leq\tau/2\right\}$. Let $\eta=\sup\{\|\bA x\|:x\in\mathcal S\}$. Clearly $\eta<\|\bA\|$. 
 Let $\mu=\min\{\tau/2, (\|\bA\|-\eta)/2\}$.  Let $\boldsymbol{w^0}=\mu(\overline{w_1}, \overline{w_2}, \dots, \overline{w_d})$. If $x\in\mathcal S$, then \begin{eqnarray*}\|(\bA-\boldsymbol{w^0}\I)x\|&\leq& \|\bA x\|+\mu\|(w_1, w_2, \dots, w_d)\|\\ &\leq& \eta+\mu\\ &<&\|\bA\|.\end{eqnarray*} Suppose $x$ is a unit vector such that $x\notin\mathcal S$. We write $\bA x$ as $$\bA x=\left((\alpha_1+i\beta_1)x, (\alpha_2+i\beta_2)x, \dots,(\alpha_d+i\beta_d)x\right)+\boldsymbol{y},$$ where $\bra\boldsymbol{y}|(x,x,\dots,x)\ket=0$ and $\alpha_j, \beta_j\in\mathbb R$ for all $1\leq j\leq d$. Then \begin{eqnarray*}\|(\bA-\boldsymbol{w^0}\I)x\|^2&=&\sum\limits_{j=1}^d(\alpha_i-\mu\h(w_i))^2+\sum\limits_{j=1}^d(\beta_i+\mu\hl(w_i))^2+\|\boldsymbol{y}\|^2\\
&=&\|\bA x\|^2+\mu^2-2\mu\sum\limits_{j=1}^d(\h(w_i)\alpha_i-\hl(w_i)\beta_i)\\
&=& \|\bA x\|^2+\left(\mu^2-2\mu\h\left(\sum\limits_{j=1}^dw_i(\alpha_i+i\beta_i)\right)\right)\\
&<&\|\bA\|^2-\mu^2.
\end{eqnarray*}
The last inequality follows because $\h\left(\sum\limits_{j=1}^dw_i(\alpha_i+i\beta_i)\right)>\tau/2\geq \mu$.
This gives $\|(\bA-\boldsymbol{w^0}\I)x\|<\|\bA\|$, which contradicts our assumption.\end{proof}

Combining Theorem \ref{00010} and Proposition \ref{orthogonality2}, we get the following. 

\begin{corollary}\label{corollary} Let $\bA$ be a tuple of operators on a Hilbert space $\Hil$. If $W_0(\bA^0)$ is convex, then \begin{equation*}
\sup_{\|x\|=1}\var_x (\bA)=\dist  (\bA, \mathbb C^d \I)^2. 
\end{equation*}
\end{corollary}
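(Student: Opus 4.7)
The plan is to chain together Proposition \ref{orthogonality2} and Theorem \ref{00010}, after first applying the translation $\bA \mapsto \bA^0 = \bA - \boldsymbol{z^0}\I$. The only thing that needs to be verified is that $\bA^0$ satisfies the hypothesis of Proposition \ref{orthogonality2}, that is, that $\|\bA^0 - \z\I\| \geq \|\bA^0\|$ for all $\z\in\bC^d$.

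First I would record the defining property of $\boldsymbol{z^0}$: by the definition $\dist(\bA,\bC^d\I) = \|\bA - \boldsymbol{z^0}\I\| = \|\bA^0\|$, and the minimality of $\boldsymbol{z^0}$ gives, for every $\z \in \bC^d$,
\begin{equation*}
\|\bA^0 - \z\I\| = \|\bA - (\boldsymbol{z^0} + \z)\I\| \geq \|\bA - \boldsymbol{z^0}\I\| = \|\bA^0\|.
\end{equation*}
So the tuple $\bA^0$ meets the left-hand side condition of Proposition \ref{orthogonality2}.

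Next, since $W_0(\bA^0)$ is assumed convex, Proposition \ref{orthogonality2} applied to $\bA^0$ (in place of $\bA$) yields that $\boldsymbol{0} \in W_0(\bA^0)$. This is precisely condition \labelcref{one} of Theorem \ref{00010}. The equivalence \labelcref{one} $\Leftrightarrow$ \labelcref{two} in Theorem \ref{00010} then delivers
\begin{equation*}
\sup_{\|x\|=1}\var_x(\bA) = \dist(\bA, \bC^d\I)^2,
\end{equation*}
which is the desired conclusion.

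There is no real obstacle here; the corollary is essentially a bookkeeping composition. The only step that requires a moment's care is making sure the hypothesis ``$\|\bA - \z\I\|\geq\|\bA\|$ for all $\z$'' of Proposition \ref{orthogonality2} is applied to the translated tuple $\bA^0$ rather than to $\bA$ itself, since in general $\bA$ need not satisfy it but $\bA^0$ always does by the optimality of $\boldsymbol{z^0}$.
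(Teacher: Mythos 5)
Your argument is correct and matches the paper's own reasoning exactly: the paper simply states the corollary as ``Combining Theorem \ref{00010} and Proposition \ref{orthogonality2}'' without spelling out the details, and what you wrote is precisely that combination, with the (easy but necessary) observation that $\bA^0$ satisfies the hypothesis $\|\bA^0-\z\I\|\geq\|\bA^0\|$ by the minimality of $\boldsymbol{z^0}$.
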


In particular, we get that \eqref{main2} holds for tuples of Toeplitz operators. Note that these tuples are non-commuting.

\begin{corollary}\label{main thm1}
For a tuple of Toeplitz operators $\bA$, we have
\begin{equation*}
\dist  (\bA, \mathbb C^d \I)^2=\sup_{\|x\|=1}\var_x (\bA).
\end{equation*}
\end{corollary}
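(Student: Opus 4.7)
\smallskip

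\textbf{Proof proposal for Corollary \ref{main thm1}.} By Corollary \ref{corollary}, it suffices to show that $W_0(\bA^0)$ is convex. Since each $z_j^0 I$ is a Toeplitz operator (the multiplier by the constant symbol $z_j^0$), the tuple $\bA^0 = \bA - \boldsymbol{z^0}\I$ is again a tuple of Toeplitz operators. Hence the task reduces to the following claim: for any tuple $\bB = (T_{\phi_1},\dots,T_{\phi_d})$ of Toeplitz operators on $H^2$, the joint maximal numerical range $W_0(\bB)$ is convex.

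The key structural tool is the unilateral shift $S$ on $H^2$, which characterizes Toeplitz operators by $S^* T_\phi S = T_\phi$. I would use this in two ways. First, for any $x$, a telescoping gives $\langle S^k x \mid T_{\phi_j} S^k x\rangle = \langle x \mid T_{\phi_j} x\rangle$ for every $j$ and every $k\geq 0$. Second, rewriting the intertwining as $T_{\phi_j} S = S T_{\phi_j} + (I-SS^*)T_{\phi_j}S$ with $I-SS^*$ the rank-one projection onto the constants, one obtains the orthogonal decomposition $\|T_{\phi_j} Sx\|^2 = \|T_{\phi_j} x\|^2 + \|(I-SS^*)T_{\phi_j}Sx\|^2$, so $\|T_{\phi_j} S^k x\|$ is nondecreasing in $k$, and hence so is $\|\bB S^k x\|^2=\sum_j\|T_{\phi_j}S^k x\|^2$. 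Consequently, if $(x^{(n)})$ is any sequence of unit vectors realizing a point $\boldsymbol{\lambda}=(\lambda_1,\dots,\lambda_d)\in W_0(\bB)$, then for any sequence of integers $k_n\to\infty$ chosen sufficiently rapidly, the vectors $y^{(n)} = S^{k_n} x^{(n)}$ are unit vectors satisfying $\|\bB y^{(n)}\|\to \|\bB\|$, $\langle y^{(n)}\mid T_{\phi_j} y^{(n)}\rangle\to\lambda_j$, and $y^{(n)}\to 0$ weakly (the last because $S^k\to 0$ in the weak operator topology).

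This identifies $W_0(\bB)$ with the subset of the joint essential numerical range obtained by further requiring $\|\bB y^{(n)}\|\to\|\bB\|$. To upgrade this to convexity, I would take two points $\boldsymbol{\lambda},\boldsymbol{\mu}\in W_0(\bB)$ realized by weakly null norm-attaining sequences $y^{(n)}, w^{(n)}$, and form $z^{(n)} = \sqrt{t}\,y^{(n)} + \sqrt{1-t}\,S^{N_n} w^{(n)}$ for $N_n$ growing so fast that $y^{(n)}$ and $S^{N_n}w^{(n)}$ are asymptotically orthogonal (possible since each $y^{(n)}$ has finite $H^2$ energy and $S^{N_n}$ pushes $w^{(n)}$ to arbitrarily high Fourier modes). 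One checks using the shift identities from the previous paragraph that $\|z^{(n)}\|\to 1$, $\|\bB z^{(n)}\|\to\|\bB\|$, and $\langle z^{(n)}\mid T_{\phi_j} z^{(n)}\rangle\to t\lambda_j+(1-t)\mu_j$, yielding $t\boldsymbol{\lambda}+(1-t)\boldsymbol{\mu}\in W_0(\bB)$. An equivalent but slicker finish would be to invoke the classical convexity of the joint essential numerical range together with the observation that, for Toeplitz tuples, the condition $\|\bB y^{(n)}\|\to\|\bB\|$ is automatic on a dense set of directions because $\|\bB\|_e=\|\bB\|$.

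The main obstacle I expect is the cross-term control in the convex combination step: the inner products $\langle y^{(n)} \mid T_{\phi_j}\, S^{N_n}w^{(n)}\rangle$ need to be shown to vanish in the limit for each $j$. Here the fact that each $T_{\phi_j}$ is, up to a compact error, a multiplication operator in the Calkin picture—combined with the arbitrarily fast shifting by $S^{N_n}$ which makes the high-frequency tail orthogonal to any fixed $H^2$ vector—should suffice, but this is the only step that demands careful quantitative estimates rather than formal manipulation.
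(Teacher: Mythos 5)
Your proposal is correct in its essential steps, but takes a genuinely different route from the paper. The paper proves Corollary~\ref{main thm1} in two sentences by quoting two known facts: Dash's Theorem~2.6 of \cite{dash}, which says that the joint numerical range $W(\bA)$ of a tuple of Toeplitz operators is convex, and Ch\={o}'s Theorem~1 of \cite{jnr9}, which upgrades convexity of $W(\bA)$ to convexity of the joint maximal numerical range $W_0(\bA)$; then Corollary~\ref{corollary} finishes. You instead give a direct, self-contained proof that $W_0(\bB)$ is convex for a Toeplitz tuple $\bB$, built on the identity $S^*T_\phi S = T_\phi$. Your key observations --- that $\langle S^k x\mid T_{\phi_j} S^k x\rangle$ is constant in $k$ while $\|T_{\phi_j} S^k x\|$ and hence $\|\bB S^k x\|$ are nondecreasing in $k$, and that $S^{N}\rightharpoonup 0$ lets you make the cross terms $\langle T_{\phi_j}^* y^{(n)}\mid S^{N_n}w^{(n)}\rangle$ and $\langle \bB y^{(n)}\mid \bB S^{N_n}w^{(n)}\rangle$ vanish by choosing $N_n$ large for each fixed $n$ --- are all sound, and the normalization of $z^{(n)}=\sqrt{t}\,y^{(n)}+\sqrt{1-t}\,S^{N_n}w^{(n)}$ causes no trouble once $\|z^{(n)}\|\to 1$. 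What your argument buys is independence from Dash's convexity theorem (you in effect reprove the piece of it you need) and the extra structural information that every point of $W_0$ of a Toeplitz tuple is attained along a weakly null sequence, i.e.\ $W_0(\bB)\subseteq W_e(\bB)$ and in fact $\|\bB\|_e=\|\bB\|$. The price is length. Your final ``slicker finish'' paragraph is the one place I'd push back: invoking convexity of the joint essential numerical range and the phrase ``automatic on a dense set of directions'' does not obviously close the gap without an additional argument that $W_0(\bB)$ coincides with (or is a convex subset of) $W_e(\bB)$; the explicit convex-combination construction you gave just before is the part that actually works, so I'd keep that and drop or flesh out the alternative.
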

\begin{proof} Note that $\bA^0$ is also a Toeplitz operator. It is also known that $W(\bA^0)$ is convex (by Theorem 2.6 of \cite{dash}), and thus so is $W_0(\bA^0)$. Using Corollary \ref{corollary}, we get the result.
\end{proof}

So far, convexity of $W_0(\bA^0)$ has been an important tool in obtaining \eqref{main2}. However, the below example shows that it is not necessary.

\textbf{Example 2}. Let  $A_1=\begin{bmatrix}1&0\\ 0& 0 \end{bmatrix}$ and $A_2=\begin{bmatrix}0&0\\ 1& 0 \end{bmatrix}.$ Let $\bA=(A_1,A_2)$. For any $\boldsymbol{z}=(z_1,z_2)\in\bC^2$,
\begin{align*}\|\boldsymbol{A-zI}\|^2 &=\left\|\left(\begin{bmatrix}1-z_1&0\\ 0& -z_1 \end{bmatrix},\begin{bmatrix}-z_2&0\\ 1& -z_2 \end{bmatrix} \right)\right\|^2 \\
&=\left\|\begin{bmatrix}|1-z_1|^2+|z_2|^2+1&-z_2\\ -\overline{z_2}& |z_1|^2+|z_2|^2\end{bmatrix}\right\| \\
&\geq \left\|\begin{bmatrix}|1-z_1|^2+|z_2|^2+1&-z_2\\ -\overline{z_2}& |z_1|^2+|z_2|^2 \end{bmatrix}\begin{bmatrix}1\\0 \end{bmatrix}\right\| \\
&= \left((|1-z_1|^2+|z_2|^2+1)^2+|z_2|^2\right)^{1/2} \\
&\geq 1.\end{align*}
Equality occurs if and only if $z_1=1$ and $z_2=0$. Thus $$\dist(\bA, \bC^2\I)^2=1=\|(A_1,A_2)-(I,0)\|^2.$$ So $\bA^0=\left(\begin{bmatrix}0&0\\ 0&-1 \end{bmatrix},\begin{bmatrix}0&0\\ 1&0 \end{bmatrix} \right)$. By definition, $$W_0(\boldsymbol{A}^0)=\mathcal V(\boldsymbol{A}^0)=\{(\langle x|A_1^0x\rangle,\langle x|A_2^0x\rangle) : \|x\|=1, (\boldsymbol{A}^0)^*\boldsymbol{A}^0 x = \|\boldsymbol{A^0}\|^2 x\}.$$ Since $(\bA^0)^*\bA^0 =I$, $$W_0(\boldsymbol{A}^0) = W(\boldsymbol{A}^0) = \left\{\left(-|z_2|^2, z_2\overline{z_1}\right): z_1, z_2\in\bC, |z_1|^2+|z_2|^2=1\right\}.$$ This set is not convex (see \cite[p. 138]{bonsall}).
Now let $x=(x_1, x_2)\in\bC^2$. Then
 $$\|\bA x\|^2=2|x_1|^2, \bra x|A_1 x\ket=|x_1|^2 \text{ and  }\bra x|A_2x\ket=x_2\overline{x_1}.$$ Hence $$\var\limits_{x} (\bA)=2|x_1|^2-|x_1|^4-|x_1|^2|x_2|^2. $$  
  So $$\max\limits_{\|x\|=1}\var\limits_{x} (\bA)=1=\dist(\bA, \bC^2\I)^2.$$

\section{remarks}\label{section3}

\textbf{Remark 1}. Theorem 1.3 of \cite{2021} gives that  $\|\bA-\z \I\|\geq \|\bA\|$ for all $\z\in \mathbb C^d$ if and only if there exists an at most countable set $\mathcal J$, a set of positive numbers $\{s_j: j\in\mathcal J\}$ and an  orthonormal set $\{x_j: j\in\mathcal J\}$ such that 
\begin{enumerate}
\item[(i)]  $\sum\limits_{j\in\mathcal J} s_j =1$ ,
\item[(ii)]$\bA^* \bA x_j = \|\bA\|^2x_j$  for each $j\in \mathcal J$,
\item[(iii)] $\sum\limits_{j\in\mathcal J} s_j\bra x_j |A_i x_j\ket = 0$ for each $1\leq i\leq d$. 
\end{enumerate}
Another proof of Theorem \ref{orthogonality1} now follows using the following fact. 
Let $X$ be a convex subset of $\mathbb C^d$. Let $(\boldsymbol{z_n})$ be a  sequence of elements in $X$ and let $(s_n)$ be a sequence of non-negative numbers such that $\sum\limits_{n=1}^{\infty}s_n =1$. If $\sum\limits_{n=1}^{\infty}s_n \boldsymbol{z_n}$ exists, then it lies in $X$. 

\textbf{Remark 2}.  Let $\bA$ be a tuple of commuting matrices. We note that when each $A_j$ is a $2\times 2$ or $3\times 3$ matrix, $W(\bA)$ is convex  (see Proposition 2.3 and Theorem 3.4, respectively, of \cite{newpaper}). Thus in these cases, \eqref{main2} holds. Any collection of mutually commuting matrices have a common eigenvector. Using this fact and following the idea of the proof of \cite[Lemma 5]{connection}, we obtain
\begin{align*} \big\{\bra \boldsymbol{A}x| \boldsymbol{y}\ket &: x\in\Hil, \boldsymbol{y}\in\Hil^d, \|x\|=\|\boldsymbol{y}\|=1, \bra x|y_j\ket =0 \text{ for all } 1\leq j\leq d\big\}\\
&=\big\{z\in\bC: |z|^2\leq\var_x (\bA)\text{ for some } x\in\Hil, \|x\|=1\big\}.
\end{align*}
For a tuple of operators, we also have equivalence of $\labelcref{two}$ and $\labelcref{three}$ in Theorem \ref{00010}. This raises the curiosity if  \eqref{main2} holds for every tuple of commuting operators. 

%1(b) For a tuple of operators, we have equivalence of \labelcref{two} and \labelcref{three} in Theorem \ref{00010}. The set in \labelcref{three} has following connection with variance for a tuple of commuting matrices $\bA$,\begin{eqnarray*} \left\{\bra \textbf{A}x| \textbf{y}\ket : x\in\Hil, \textbf{y}\in\Hil^d, \|x\|=\|\textbf{y}\|=1, \bra x|y_j\ket =0 \text{ for all } 1\leq j\leq d\right\}\\=\left\{z\in\bC: |z|^2\leq\var_x (\bA)\text{ for some } x\in\Hil, \|x\|=1\right\}.\end{eqnarray*}This can be proved along the lines of the proof of \cite[Lemma 5]{connection} by using the fact that a collection of mutually commuting matrices have a common eigenvector. We note that for any tuple $\bA$ of commuting $2\times 2$ matrices, distance formula \eqref{main2} holds because $W(\bA)$ is convex (see \cite{jnr9}).This raises the curiosity if  \eqref{main2} holds for any commuting tuple of operators. 
\textbf{Remark 3}.  The condition $\|\bA-\z \I\|\geq \|\bA\|\text{ for all } \z\in \mathbb C^d$ is same as saying $\boldsymbol{0}$ is a best approximation of $\bA$ to the subspace $\bC^d\I$ of $\mathscr B(\Hil,\Hil^d)$ (see \cite{Singer}). In other words, we say $\bA$ is \emph{Birkhoff-James orthogonal} to $\bC^d\I$ (see \cite{James2}). Proposition \ref{orthogonality2} is a characterization for $\bA$ to be Birkhoff-James orthogonal to the subspace $\bC^d\I$. Some characterizations of Birkhoff-James orthogonality of an element to a subspace can be found in  \cite{2014, 2017, 2019, GroverSingla, Keckic3, Keckic2, arxiv, 2021, Zietak2}.

\textbf{Remark 4}. From the proof of Proposition 8, it follows that if $W_0(\bA)$ is convex, then the following are equivalent.
\begin{enumerate}
\item[(i)] $\|\bA-\z \I\|\geq \|\bA\|\text{ for all } \z\in \mathbb C^d$
\item[(ii)] $\boldsymbol{0}\in W_0(\bA)$
\item[(iii)] $\|\bA-\z \I\|^2\geq \|\bA\|^2+\|\z\|^2\text{ for all } \z\in \mathbb C^d.$
\end{enumerate}

\textbf{Acknowledgements.} The first named author's research is supported by an Early Career Research Award (ECR/2018/001784) of SERB, India. The authors would like to thank Rajendra Bhatia for suggesting the possibility of exploring this problem for tuples. The authors are extremely thankful to the referee for her/his comments.

\end{document}